\newtheorem{theorem}{Theorem}
\newtheorem{conjecture}[theorem]{Conjecture}
\newtheorem{lemma}[theorem]{Lemma}
\newtheorem{remark}[theorem]{Remark}
\newtheorem{proposition}[theorem]{Proposition}
\newtheorem{definition}[theorem]{Definition}
\theoremstyle{definition}
\newcommand{\cC}{\mathcal{C}}
\newcommand{\cD}{\mathcal{D}}
\newcommand{\cH}{\mathcal{H}}
\newcommand{\cI}{\mathcal{I}}
\newcommand{\cM}{\mathcal{M}}
\newcommand{\cN}{\mathcal{N}}
\newcommand{\cV}{\mathcal{V}}
\newcommand{\bA}{\mathbf{A}}
\newcommand{\bH}{\mathbf{H}}
\newcommand{\bR}{\mathbf{R}}
\newcommand{\bS}{\mathbf{S}}
\newcommand{\bT}{\mathbf{T}}
\newcommand{\bW}{\mathbf{W}}
\newcommand{\T}{\mathbb T}
\newcommand{\e}{\varepsilon}
\def\set4{\mathcal I}
\def\tup14{(1,2,3,4)}
\newtheorem*{comm*}{Comment}
\newtheorem*{lemma*}{Lemma}
\newcommand{\supp}{\mathrm{supp}}
\newcommand{\R}{\mathbb{R}}
\newcommand{\D}{\mathbb D}
\newcommand{\de}{\delta} 
\newcommand{\wh}{\widehat}
\newcommand{\wt}{\widetilde}
\newcommand{\en}{\epsilon_{\circ}}
\newcommand{\al}{\alpha}
\newcommand{\dir}{\textup{dir}}
\newcommand{\tx}{\textup}
\newcommand{\cQ}{\mathcal Q}
\DeclareMathOperator*{\esssup}{ess\,sup}
\newcommand{\Bush}{\textup{Bush}}
\newcommand{\cP}{\mathcal{P}}
\newcommand{\cF}{\mathcal{F}}
\newcommand{\G}{\mathbb{G}}
\newcommand{\Aloc}{A_{\text{loc}}}
\newcommand{\bell}{\boldsymbol{\ell}}
\begin{document}

 \author{Shengwen Gan}
 \address{Department of Mathematics\\
 University of Wisconsin-Madison\\
 Madison, WI-53706, USA}
 \email{sgan7@wisc.edu}

\keywords{Marstrand projection theorem, exceptional set estimate, Fourier analysis, Brascamp-Lieb inequality}
\subjclass[2020]{28A75, 28A78}

\date{}

\title{A Marstrand projection theorem for lines}
\maketitle

\begin{abstract}
Fix integers $1<k<n$.
For $V\in G(k,n)$, let $P_V: \mathbb{R}^n\rightarrow V$ be the orthogonal projection. For $V\in G(k,n)$, define the map
\[ \pi_V: A(1,n)\rightarrow  A(1,V)\bigsqcup V. \]
\[ \ell\mapsto P_V(\ell). \]

For any $0<a<\textup{dim}(A(1,n))$, we find the optimal number $s(a)$ such that the following is true. For any Borel set $\bA \subset A(1,n)$ with $\dim(\bA)=a$, we have
\[ \dim(\pi_V(\bA))=s(a), \textup{~for~a.e.~}V\in G(k,n). \]
When $A(1,n)$ is replaced by $A(0,n)=\R^n$, it is the classical Marstrand  projection theorem, for which $s(a)=\min\{k,a\}$. A new ingredient of the paper is the Fourier transform on affine Grassmannian.
\end{abstract}

\section{Introduction}
For $k<n$,
let $G(k,n)$ be the set of $k$-dimensional subspaces in $\R^n$, and $A(k,n)$ be the set of $k$-dimensional affine spaces in $\R^n$. For $V\in G(k,n)$ and $l<k$, let $G(l,V)$ be the set of $l$-dimensional subspaces in $V$, and $A(l,V)$ be the set of $l$-dimensional affine spaces in $V$. For simplicity, we just call $l$-dimensional affine space as $l$-plane. Fix integers $1<k<n$. For $V\in G(k,n)$, let 
\[ P_V:\R^n\rightarrow V \]
be the orthogonal projection onto $V$. 

Let $V\in G(k,n)$.
Note that for $L\in A(1,n)$, we have $P_V(L)$ is either a line or a point in $V$. We can define
\[ \pi_V: A(1,n)\rightarrow  A(1,V)\bigsqcup V \]
\[ L\mapsto P_V(L). \]

Marstrand \cite{marstrand1954some} proved the following result. Let $A\subset \R^n$ with $\dim (A)=a$. Then
\begin{equation}\label{classical}
    \dim(P_V(A))=\min\{a,k\}, \textup{~for~a.e.~} V\in G(k,n). 
\end{equation} 
Naturally, one can consider the following Marstrand-type projection problem for $A(1,n)$. 

For $0<a<2(n-1)=\dim(A(1,n))$, what is the optimal number $s(a)$ such that the following is true?
Let $\bA\subset A(1,n)$ with $\dim(\bA)=a$. (Since $A(1,n)$ is a Riemannian manifold, $\dim(\bA)$ is naturally defined.) Then we have
\begin{equation}
    \dim(\pi_V(\bA))=s(a), \textup{~for~a.e.~} V\in G(k,n).
\end{equation}

It is not hard to see that 
\[ s(a)\le \min\{a, \dim(A(1,k))\}=\min\{a,2(k-1)\}.\] 
At the first glance, one may guess the optimal number $s(a)$ is $\min\{a,2(k-1)\}$ which has the same form as in \eqref{classical}. However, this is not true as shown in the following example. 

Consider $n=3,k=2$. Let $\bA=G(1,3)$ which is the set of lines passing through the origin.
We see that for any $V\in G(2,3)$, $\pi_V(\bA)=G(1,V)\bigsqcup \{0\}$ which is one-dimensional set of lines plus a point. We get 
\[ \dim(\pi_V(\bA))=1<2=\min\{ \dim(\bA), 2(k-1)\}. \]

We see that the problem becomes quite different when we consider the projection of lines, compared with the projection of points.
One reason is that distinct lines can have overlaps while distinct points do not. Because of this overlapping phenomenon, we are able to stack lines in $\R^n$ so that their projections to subspaces have some different structure. This makes the line version of Marstrand projection problem harder than the point version.

\begin{definition}
    Fix $1< k<n$. For any $0<a<\dim(A(1,n))$, define
    \begin{equation}\label{defSa}
        S(a):=\inf_{\bA\subset A(1,n), \dim(\bA)=a}\esssup_{V\in G(k,n)}\dim(\pi_V(\bA)).
    \end{equation}
Here $\esssup$ is with respect to the unique probability measure on $G(k,n)$ which is invariant under rotation. 
In our paper, we always assume $\bA$ to be a Borel set to avoid some measurability issue. We also remark that $S(a)=S_{k,n}(a)$ should also depend on $k,n$, but we just drop them from the notation for simplicity.
\end{definition}

We state our main theorem.

\begin{theorem}\label{mainthm}
    We have the following exact value of $S(a)$.
    \begin{align}
    &S(a)= a,\ \ \ a\in [0,k-1],\\
    &S(a)=k-1,\ \ \ a\in [k-1,n-1],\\
    &S(a)=a-(n-k),\ \ \ a\in [n-1,n+k-2],\\
    &S(a)=2(k-1),\ \ \ a\in [n+k-2, 2(n-1)].
\end{align}
\end{theorem}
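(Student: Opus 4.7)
For the upper bound half I would exhibit a distinct extremal family in each of the four regimes. For $a\in[0,k-1]$, the trivial Lipschitz estimate $\dim\pi_V(\bA)\le\dim\bA$ yields $S(a)\le a$; realize this by any $a$-dimensional Borel set $\bA\subset A(1,V_0)$ for a fixed $V_0\in G(k,n)$. For $a\in[k-1,n-1]$, let $\bA$ be an $a$-dimensional Borel subset of $G(1,n)\subset A(1,n)$ (lines through the origin); since $\pi_V$ maps $G(1,n)$ into $G(1,V)\cup\{0\}$, we get $\dim\pi_V(\bA)\le k-1$. For $a\in[n-1,n+k-2]$, set $m:=a-(n-1)\in[0,k-1]$, pick a sufficiently regular Borel set $E\subset\R^n$ of Hausdorff dimension $m$, and take $\bA=\{L\in A(1,n):L\cap E\ne\emptyset\}$; classical Marstrand gives $\dim P_V(E)=m$ for a.e.\ $V$, and $\pi_V(\bA)$ sits in the set of lines in $V$ meeting $P_V(E)$, of dimension $m+(k-1)=a-(n-k)$. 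For $a\in[n+k-2,2(n-1)]$, $S(a)\le 2(k-1)=\dim A(1,V)$ is automatic.

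\textbf{Lower bounds.} For the matching lower bound, the plan is an energy method transplanted to the affine Grassmannian. Fix a Borel set $\bA$ with $\dim\bA=a$ and $s<S(a)$; by Frostman's lemma in the natural Riemannian metric on $A(1,n)$, produce a probability measure $\mu$ supported in $\bA$ with $\mu(B(L,r))\lesssim r^{a-\eta}$ for small $\eta>0$. The target is the averaged energy inequality
\[
\int_{G(k,n)}I_s(\pi_V\mu)\,dV\;\le\;C\int\!\!\int K(L,L')\,d\mu(L)\,d\mu(L'),
\]
where $I_s$ is $s$-Riesz energy in a natural metric on $A(1,V)\cup V$ and the kernel $K$ is chosen so the right side is finite whenever $s<S(a)$. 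To identify $K$, write $L=x+\R v$ with $v\in G(1,n)$ and $x\in v^\perp$, so $A(1,n)$ fibers over $G(1,n)$ with Euclidean fibers, and introduce the Fourier transform on the affine Grassmannian (advertised in the abstract) by Fourier-expanding $\mu$ on each fiber $v^\perp$. This converts $I_s(\pi_V\mu)$ into an $L^2$ integral of $\widehat{\mu}$ against a multiplier parametrized by $(V,v)$; averaging the multiplier over $V\in G(k,n)$ produces an explicit frequency-space kernel whose integrability encodes $S(a)$. The joint direction/translation product structure of $A(1,n)$ makes the resulting inequality naturally of Brascamp--Lieb type, matching the keyword list.

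\textbf{Main obstacle.} The crux is showing that the averaged kernel has precisely the right critical exponent in each of the four regimes, producing the sharp transitions at $a=k-1,\,n-1,\,n+k-2$. I expect the Fourier side to split according to whether $\widehat\mu$ concentrates on high-translation or high-angular frequencies, producing different Brascamp--Lieb data in each regime; interpolation at the transition $a=n-1$ (where the extremizing family jumps from a single bush at a point to bushes along a positive-dimensional variety) looks the most delicate to calibrate. A secondary issue is the singularity of $\pi_V$ at lines with direction $v\in V^\perp$, which one can handle either by showing $\mu$ puts negligible mass on such lines for a.e.\ $V$, or by truncation and passage to a limit (facilitated by the $\esssup$ in \eqref{defSa}).
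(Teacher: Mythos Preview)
Your examples are essentially the paper's, with cosmetic differences. The paper handles all of $a\in[0,n-1]$ at once by taking $\bA\subset G(1,n)$ (this covers your first two cases simultaneously), and for $a\in[n-1,2(n-1)]$ it takes $A\subset\R^{n-1}$ of dimension $a-(n-1)$ and the disjoint union of bushes $\bA_x=\{\ell\ni x:\ell\not\subset\R^{n-1}\}$, which is your third construction with the disjointness made explicit. One small point: in your third case you do not need Marstrand, since the Lipschitz bound $\dim P_V(E)\le m$ already gives $\dim\pi_V(\bA)\le m+(k-1)$ for \emph{every} $V$, which is what the $\esssup$ requires.

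\textbf{Lower bounds.} Here your plan and the paper diverge substantially. The paper does \emph{not} prove a single averaged-energy inequality of the form $\int_{G(k,n)}I_s(\pi_V\mu)\,dV\le C\,\iint K\,d\mu\,d\mu$. Instead it proves two exceptional-set estimates and uses \emph{both}:
\begin{itemize}
\item a Kaufman-type bound $\dim E_s(\bA)\le k(n-k)+s-(k-1)$, handling $a\in[0,n-1]$, proved by a purely combinatorial $\delta$-discretized incidence argument between tubes and $(n-k+1)$-slabs, with the key geometric input being that the set of $V\in G(k,n)$ collapsing a fixed $2$-plank to a line has dimension $1+k(n-k-1)$;
\item a Falconer-type bound $\dim E_s(\bA)\le k(n-k)+s-a+(n-k)$, handling $a\in[n-1,2(n-1)]$, proved via the Fourier transform on $\wt A(1,n)\cong\R^{n-1}\times\R^{n-1}$ together with a high--low frequency decomposition and slab-overlap counting in frequency space.
\end{itemize}
The paper stresses that, unlike the classical point case, neither estimate alone suffices; and it remarks explicitly that the Peres--Schlag transversality framework (which is the abstract version of the energy method you propose) does \emph{not} give the sharp exponent here.

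Your Fourier-on-$A(1,n)$ idea is genuinely the right ingredient for the Falconer half, but the paper implements it through high--low decomposition and geometric overlap bounds, not through an averaged kernel or a Brascamp--Lieb inequality (Brascamp--Lieb is a keyword but never appears in the proof). For the Kaufman half there is no Fourier content at all in the paper. So the gap in your plan is real: you have no mechanism for the regime $a\le n-1$, and the obstacle you correctly flag --- getting the kernel to produce the full piecewise-linear graph of $S(a)$ with breaks at $k-1$, $n-1$, $n+k-2$ --- is precisely what the paper sidesteps by running two independent arguments. To complete your route you would have to either carry out the kernel computation in all regimes (which the Peres--Schlag remark suggests is not straightforward) or supplement it with a separate argument for small $a$.
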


\begin{remark}
    {\rm
    We can also write $S(a)$ as
     \begin{align}
    &S(a)= \min\{a,k-1\},\ \ \ a\in [0,n-1],\\
    &S(a)=\min\{a-(n-k),2(k-1)\},\ \ \ a\in [n-1, 2(n-1)].
    \end{align}
    }
\end{remark}

We will prove Theorem \ref{mainthm} by showing the following two propositions.

\begin{proposition}\label{upperprop}
    We have the following upper bounds of $S(a)$. 
\begin{equation}\label{upperineq}
    \begin{split}
    &S(a)\le \min\{a,k-1\},\ \ \ a\in [0,n-1],\\
    &S(a)\le \min\{a-(n-k),2(k-1)\},\ \ \ a\in [n-1, 2(n-1)].
    \end{split}
\end{equation}
\end{proposition}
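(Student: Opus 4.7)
The plan is to exhibit two families of explicit Borel sets $\bA \subset A(1,n)$, one for each range, that realize the claimed upper bounds on $S(a)$. Both are natural generalizations of the example $\bA = G(1,n)$ already discussed after the statement of Theorem \ref{mainthm}.

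For $a \in [0, n-1]$, I take $\bA$ to be a Borel subset of $G(1,n)$ (the Grassmannian of lines through the origin, embedded in $A(1,n)$ as a smooth submanifold of dimension $n-1$) with $\dim \bA = a$. For every $V \in G(k,n)$, $\pi_V$ sends a line through the origin either to a line through the origin in $V$ or to the point $\{0\}$, so $\pi_V(\bA) \subset G(1,V) \sqcup \{0\}$ and $\dim \pi_V(\bA) \le k-1$. Since $\pi_V$ is locally Lipschitz on $G(1,n) \setminus G(1,V^\perp)$ (where $G(1, V^\perp)$ is the set of lines perpendicular to $V$), the image also satisfies $\dim \pi_V(\bA) \le \dim \bA = a$, yielding $\dim \pi_V(\bA) \le \min\{a, k-1\}$.

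For $a \in [n-1, 2(n-1)]$, I set $t := a - (n-1) \in [0, n-1]$ and choose a Borel set $T \subset \R^n$ with $\dim T = t$ in sufficiently general position (e.g., a subset of a generic hyperplane containing no line segment; for fractional $t$, a product of Cantor sets of appropriate dimensions works). Define
\[ \bA := \{ \ell \in A(1,n) : \ell \cap T \ne \emptyset \}. \]
Parameterized by the locally Lipschitz map $\Phi : T \times G(1,n) \to A(1,n)$, $(p,d) \mapsto$ (line through $p$ in direction $d$), we have $\dim \bA \le t + (n-1) = a$; the general position of $T$ ensures $\Phi$ is generically finite-to-one, giving $\dim \bA = a$. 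For a.e.\ $V \in G(k,n)$, Marstrand's classical theorem gives $\dim P_V(T) = \min\{t,k\}$, and a line through $p \in T$ in direction $d$ projects either to a line through $P_V(p) \in P_V(T)$ with direction $P_V(d) \in G(1,V)$ (when $d \not\perp V$) or to the point $P_V(p)$ otherwise. Hence
\[ \pi_V(\bA) \subset \{ \ell' \in A(1,V) : \ell' \cap P_V(T) \ne \emptyset\} \cup P_V(T), \]
and applying the analogous Lipschitz parameterization $\Phi_V : P_V(T) \times G(1,V) \to A(1,V)$ bounds the image dimension by $\min\{t,k\} + (k-1)$; together with the trivial ceiling $\dim A(1,V) = 2(k-1)$, this yields $\dim \pi_V(\bA) \le \min\{a-(n-k), 2(k-1)\}$.

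The argument is purely constructive, and there is no deep analytic obstacle; the main items to verify are that one can choose $T$ so that $\dim \bA = a$ exactly (which reduces to a standard finite-to-one parameterization argument, valid as long as $T$ contains no line segment) and that $\pi_V$ and its projected analogue $\Phi_V$ are locally Lipschitz off the perpendicular-direction stratum so that they do not increase Hausdorff dimension. Both are routine facts from the dimension theory of the affine Grassmannian.
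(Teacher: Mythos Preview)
Your approach is essentially the same as the paper's: for $a\in[0,n-1]$ you both take $\bA$ to be an $a$-dimensional subset of $G(1,n)$, and for $a\in[n-1,2(n-1)]$ you both take $\bA$ to be the set of lines meeting a fixed $(a-(n-1))$-dimensional set $T$ placed inside a hyperplane, using the resulting product/bi-Lipschitz parameterization to compute $\dim\bA$ and to bound $\dim\pi_V(\bA)$. The one superfluous step in your write-up is the appeal to Marstrand's theorem for $\dim P_V(T)$: only the trivial Lipschitz upper bound $\dim P_V(T)\le t$ is needed, and with it the inequality $\dim\pi_V(\bA)\le\min\{a-(n-k),2(k-1)\}$ holds for \emph{every} $V$, as in the paper, not merely almost every $V$.
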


\begin{proposition}\label{lowerprop}
    We have the following lower bounds of $S(a)$. 
\begin{align}
\label{lowerineq1}&S(a)\ge \min\{a,k-1\},\ \ \ a\in [0,n-1],\\
    \label{lowerineq2}&S(a)\ge \min\{a-(n-k),2(k-1)\},\ \ \ a\in [n-1, 2(n-1)].
\end{align}
\end{proposition}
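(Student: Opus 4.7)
The plan is to prove the lower bound by the potential-theoretic energy method on the affine Grassmannian $A(1,n)$, using the Fourier transform on $A(1,n)$ announced in the abstract. Fix a Borel set $\bA\subset A(1,n)$ with $\dim\bA=a$; for any $s<a$, Frostman's lemma supplies a probability measure $\mu$ supported on $\bA$ satisfying $\mu(B_r(\ell))\lesssim r^{s}$ in the Riemannian metric, so that $\mu$ has finite $t$-energy $I_t(\mu)$ for every $t<s$. The goal becomes to show that $I_t(\pi_{V*}\mu)<\infty$ for a.e.\ $V\in G(k,n)$ whenever $t<F(a)$, where $F$ denotes the piecewise-linear function appearing in Proposition~\ref{lowerprop}.

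Next I would parametrize $A(1,n)$ by pairs $(e,x)$ with $e\in S^{n-1}/\{\pm1\}$ and $x\in e^\perp$, and introduce the partial Fourier transform $\widehat\mu(e,\xi):=\int_{e^\perp}e^{-2\pi i\langle x,\xi\rangle}\,d\mu(e,x)$ for $\xi\in e^\perp$, encoding $e$-regularity via a Sobolev-type norm on the fiber bundle. The Riesz $t$-energy $I_t(\mu)$ admits a Plancherel-style representation in $(e,\xi)$. For $V\in G(k,n)$ the map $\pi_V$ reads, in these coordinates, $(e,x)\mapsto(e',x')$ with $e'=P_V(e)/|P_V(e)|$ and $x'=P_V(x)-\langle P_V(x),e'\rangle e'$, degenerating exactly on $\{e\in V^\perp\}$. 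One then expresses $\widehat{\pi_{V*}\mu}(e',\xi')$ on $A(1,V)$ as a fibered integral of $\widehat\mu(e,\cdot)$ over $\{e:P_V(e)\parallel e'\}$ with an explicit Jacobian.

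The heart of the argument is to bound the Grassmannian average $\int_{G(k,n)} I_t(\pi_{V*}\mu)\,dV$. By Plancherel in $\xi'$ and Fubini this becomes a multilinear integral in the Fourier data of $\mu$, with integration variables parametrizing both the frequencies $\xi,\xi'$ and the directions $e,e'$; its structure is precisely that of a Brascamp--Lieb inequality on a product of (affine) Grassmannians. After verifying the Bennett--Carbery--Christ--Tao rank and scaling conditions for the resulting BL datum and invoking the BL inequality, one should obtain
\[
\int_{G(k,n)} I_t(\pi_{V*}\mu)\,dV \,\lesssim\, I_s(\mu),
\]
valid precisely when $t\le F(s)$. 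Letting $s\to a$ then $t\to F(s)$, finiteness of the average forces $I_t(\pi_{V*}\mu)<\infty$ for a.e.\ $V$, hence $\dim(\pi_V(\bA))\ge F(a)$ almost surely.

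The piecewise nature of $F$, with breakpoints at $a=k-1,n-1,n+k-2$, reflects which marginal of the BL inequality saturates: for $a\le n-1$ the direction sphere is the bottleneck (a Marstrand-type loss $G(1,n)\to G(1,V)$), whereas for $a\ge n-1$ the position slice becomes the bottleneck (an analogous loss $\R^n\to V$). The main obstacle is the BL step itself: the constraint $x\in e^\perp$ couples the direction and position variables, and the map $e\mapsto e'$ is singular on $\{e\in V^\perp\}$, so one must localize away from this locus and handle the degenerate contribution (where the projected line collapses to a point in $V$ rather than a line in $A(1,V)$) separately. Constructing the correct BL datum, checking its scaling and rank conditions, and matching the sharp exponent to the piecewise-linear $F$ in all four subranges is the technical heart of the proof.
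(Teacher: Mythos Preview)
Your approach is genuinely different from the paper's. The paper does not bound $\int_{G(k,n)} I_t(\pi_{V*}\mu)\,dV$ directly; instead it proves two exceptional-set estimates (Theorem~\ref{exthm}) and deduces Proposition~\ref{lowerprop} from them in Section~\ref{sec3}. The Kaufman-type bound \eqref{kaufman}, proved by a $\delta$-discretized tube--slab incidence count, yields \eqref{lowerineq1}; the Falconer-type bound \eqref{falconer}, proved by a high--low decomposition using the Fourier transform on $\wt A(1,n)$ in the coordinates $(X,Y)\in\R^{n-1}\times\R^{n-1}$ (intersections with two parallel hyperplanes, not your $(e,x)$ bundle coordinates), yields \eqref{lowerineq2}. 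The paper stresses that \emph{both} estimates are needed, and explicitly remarks that the Peres--Schlag generalized-projection framework---the natural home for an energy-averaging argument of the type you propose---``does not yield the precise estimate we require.''

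There is a real gap in your proposal: the Brascamp--Lieb step is asserted but never set up or verified, and it is exactly the crux. Two concrete obstructions deserve attention. First, the kernel $\int_{G(k,n)} d(\pi_V\ell_1,\pi_V\ell_2)^{-t}\,dV$ is governed by two independent scales of the pair $(\ell_1,\ell_2)$---the angular separation of directions and the transverse separation of base points---so the piecewise-linear target $F$ reflects genuinely different saturation mechanisms in different ranges; this is precisely why the paper runs two separate arguments rather than a single energy bound, and you would need to explain how a single BL datum detects all four regimes. Second, after Plancherel in the fiber your integral is not in standard BL form: the constraint $x\in e^\perp$ couples base and fiber, the direction map $e\mapsto P_V e/|P_V e|$ is nonlinear and singular on $e\in V^\perp$, and the outer integration is over $G(k,n)$ rather than a Euclidean space. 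You acknowledge all of this but do not resolve it; without the BL datum written down and the BCCT rank and scaling conditions checked in each subrange, the proposal remains a strategy rather than a proof.
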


The proof of Proposition \ref{upperprop} is short and provides good examples. We give it here.
\begin{proof}[Proof of Proposition \ref{upperprop}]
    When $a\in [0,n-1]$, we choose $\bA$ to be a subset of $G(1,n)$ with dimension $a$. We see that for any $V \in G(k,n)$, $\pi_V(\bA)\subset G(1,V)\bigsqcup \{0\}$. Therefore, 
    \[ \dim(\pi_V(\bA))\le \min\{a,k-1\} \]
    for any $V\in G(k,n)$, and hence
    \[ S(a)\le \min\{a,k-1\}. \]

    When $a\in [n-1,2(n-1)]$, we write $a=n-1+\beta$. Choose $A\subset \R^{n-1}$ (here $\R^{n-1}$ is spanned by the first $n-1$ coordinates) to be a $\beta$-dimensional set. For each $x\in A$, let $\bA_x$ be the set of lines passing through $x$ and transverse to $\R^{n-1}$. More precisely,
    \[\bA_x =\{\ell\in A(1,n): x\in \ell, \ell\not\subset \R^{n-1}   \}. \]
    Choose $\bA=\bigsqcup_{x\in A} \bA_x$. Since $\{\bA_x\}_{x\in A}$ are disjoint and $\bA$ has a product structure, we have
    \[ \dim(\bA)=\dim A+\dim(\bA_x)=\beta+n-1=a. \]
    Note that if $V\in G(k,n)$ and $V$ does not contain the $x_n$-axis, then $\pi_V(\bA_x)=\big(P_V(x)+G(1,V)\big)\bigsqcup \{P_V(x)\}$. If $V\in G(k,n)$ contains the $x_n$-axis, then $\pi_V(\bA_x)=P_V(x)+G(1,V)$.
    As a result, $\pi_V(\bA)=\bigcup_{x\in A} \pi_V(\bA_x)$ consists of at most $\beta$-dimensional translated copies of $G(1,V)$ plus at most $\beta$-dimensional set of points. We have
    \[ \dim(\pi_V(\bA))\le \beta+k-1=a-(n-k). \]
    Of course, we also have $\dim(\pi_V(\bA))\le \dim(A(1,V)\bigsqcup V)= 2(k-1)$.
\end{proof}

For Proposition \ref{lowerprop}, we will be able to prove a stronger result known as the exceptional set estimate.

Recall that $P_V:\R^n\rightarrow V$ is the orthogonal projection for a given $V\in G(k,n)$. For a set $A\subset \R^n$ with $\dim A=a$ and a parameter $s$ satisfying $0<s<\min\{a,k\}$, we consider the set
\begin{equation}
    \{ V\in G(k,n): \dim(P_V(A))<s \},
\end{equation}
which is known as the exceptional set. There are two types of the estimates for the exceptional set:

\begin{equation}\label{kauf}
   \dim \bigg( \{ V\in G(k,n): \dim(P_V(A))<s \} \bigg)\le k(n-k)+s-k. 
\end{equation}

\begin{equation}\label{falc}
   \dim \bigg( \{ V\in G(k,n): \dim(P_V(A))<s \} \bigg)\le k(n-k)+s-a; 
\end{equation}

We call the first one Kaufman-type estimate, and the second one Falconer-type estimate. See the reference \cite{kaufman1968hausdorff}, \cite{falconer1982hausdorff}, \cite{peres2000smoothness} and \cite{mattila2015fourier}. It is not hard to see that by letting $s\rightarrow \min\{a,k\}$, either \eqref{falc} or \eqref{kauf} will imply \eqref{classical}.

By the same idea, we can deduce Proposition \ref{lowerprop} from the following two exceptional set estimates. 
\begin{theorem}\label{exthm}
    Fix a number $0<\mu<1/100$. Let $A_\mu$ be a ball of radius $\mu$ in $\Aloc(1,n)$ and $G_\mu$ be a ball of radius $\mu$ in $G(k,n)$, so that for any $\ell\in A_\mu, V\in G_\mu$, we have
    \begin{equation}\label{transcond}
        \angle (\ell,V^\perp)>\mu. 
    \end{equation}
    For $\bA\subset A_\mu$ with $\dim(\bA)=a$, and $0<s<\min\{a,2(k-1)\}$, define the exceptional set
    \begin{equation}
        E_s(\bA):=\{ V\in G_\mu: \dim(\pi_V(\bA))<s \}.
    \end{equation}
We have the following estimates:

\begin{equation}\label{kaufman}
\dim(E_s(\bA))\le k(n-k)+s-(k-1).    
\end{equation}

\begin{equation}\label{falconer}
\dim(E_s(\bA))\le \max\{0,k(n-k)+s-a+(n-k)\}.    
\end{equation}
\end{theorem}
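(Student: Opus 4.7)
The plan is to develop a Fourier-analytic framework on the affine Grassmannian $A(1,n)$ and use it to run analogs of the classical Kaufman and Falconer exceptional-set arguments. In the ball $A_\mu$, I would parametrize each line by a pair $(v,w) \in \R^{n-1} \times \R^{n-1}$ (direction and intercept), identifying $A_\mu$ with an open subset of $\R^{2(n-1)}$, and similarly identify $A(1,V) \simeq \R^{2(k-1)}$ in a chart near $V_0 \in G_\mu$. Under the transversality hypothesis \eqref{transcond}, a routine computation shows that in these charts $\pi_V$ becomes, up to a smooth change of variables, a linear surjection $\Pi_V: \R^{2(n-1)} \to \R^{2(k-1)}$ depending smoothly on $V$. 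The key identity---which I read as the ``Fourier transform on affine Grassmannian'' announced in the abstract---is then the slice formula
\[
\widehat{\pi_{V*}\nu}(\xi) \;=\; \hat\nu(\Pi_V^\ast \xi), \qquad \xi \in \R^{2(k-1)},
\]
so $\widehat{\pi_{V*}\nu}$ is the restriction of $\hat\nu$ to the $2(k-1)$-dimensional subspace $W_V := \Pi_V^\ast(\R^{2(k-1)}) \subset \R^{2(n-1)}$.

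Next I would place a Frostman measure $\nu$ on $\bA$ of dimension $a - \epsilon$. For $V \in E_s(\bA)$ one has $\dim(\pi_{V*}\nu) < s$, so
\[
I_s(\pi_{V*}\nu) \;\asymp\; \int_{\R^{2(k-1)}} |\hat\nu(\Pi_V^\ast \xi)|^2 \, |\xi|^{s - 2(k-1)} \, d\xi \;=\; \infty.
\]
Arguing by contradiction, I assume $\dim E_s > t$ and place a Frostman measure $\sigma$ of dimension $t - \epsilon$ on $E_s$. Integrating the $s$-energy against $\sigma$ and interchanging the order of integration produces an expression of the form
\[
\int_{\R^{2(n-1)}} |\hat\nu(\zeta)|^2 \, K_\sigma(\zeta) \, d\zeta,
\]
where $K_\sigma(\zeta)$ records, geometrically, the weighted density of slices $W_V$ passing near $\zeta$ as $V$ ranges over $\textup{supp}(\sigma)$.

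The two target bounds now follow from two different uses of this identity. For the Kaufman-type estimate \eqref{kaufman}, I would combine the crude bound $\|\hat\nu\|_\infty \lesssim 1$ with integrability $\int K_\sigma < \infty$; the exponent $k(n-k) + s - (k-1)$ corresponds exactly to the threshold on $t$ beyond which the slice-density kernel $K_\sigma$ fails to be integrable, reflecting that the target space $A(1,V)$ has codirection dimension $k-1$ rather than $k$. For the Falconer-type estimate \eqref{falconer}, I would instead use the energy bound $\int |\hat\nu(\zeta)|^2 |\zeta|^{a - 2(n-1) - \epsilon} \, d\zeta < \infty$ coming from $I_{a-\epsilon}(\nu) < \infty$ and seek a pointwise bound $K_\sigma(\zeta) \lesssim |\zeta|^{2(n-1) - a}$ away from the origin; the extra power of $|\zeta|$ traded in this way yields the additional $(n-k)$-saving visible in the bound.

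The main obstacle I expect is the geometric analysis of the family $\{W_V\}_{V \in G_\mu}$ of $2(k-1)$-planes in $\R^{2(n-1)}$ and the resulting bounds on $K_\sigma$. In the classical Marstrand setting the analogous family is simply $\{V\}_{V \in G(k,n)}$ itself, and the averaging is governed by elementary spherical coordinates; here the $W_V$ depend on $V$ in a more intricate way reflecting how line-coordinates transform under orthogonal projection, so the averaging is no longer reducible to a single Radon transform. I expect the sharp control to require the Brascamp--Lieb machinery flagged in the keywords, with the exponents $k(n-k) + s - (k-1)$ and $k(n-k) + s - a + (n-k)$ emerging as the extremal exponents of an associated Brascamp--Lieb datum attached to the family $\{W_V\}$.
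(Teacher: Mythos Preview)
Your parametrization $(X,Y)\in\R^{n-1}\times\R^{n-1}$ and the resulting Fourier transform on $\wt A(1,n)$ match the paper exactly. The gap is everything after that. What you outline is a Peres--Schlag energy-integral argument, and the paper's remark immediately following the theorem states explicitly that this framework ``does not yield the precise estimate we require'' for this problem. The obstacle is precisely the one you flag but leave to Brascamp--Lieb: the family of $2(k-1)$-planes $\{W_V\}_{V\in G(k,n)}$ is highly non-generic. The exponent $(k-1)$ in \eqref{kaufman} is not the ``codirection dimension'' of the target (which is $2(k-1)$); it is the codimension in $G(k,n)$ of the set $\{V:\dim P_V(\cP)\le 1\}$ for a fixed $2$-plane $\cP$, which the paper computes as $\dim\cM=1+k(n-k-1)$. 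Your kernel $K_\sigma$ would have to encode exactly this degeneracy to produce the right threshold, and nothing in an abstract Brascamp--Lieb datum hands it to you --- the paper does not use Brascamp--Lieb at all. Indeed, a generic family of projections $\R^{2(n-1)}\to\R^{2(k-1)}$ would yield a Kaufman bound with $2(k-1)$ in place of $k-1$, which would force $S(a)\ge a$ throughout $a<2(k-1)$ and contradict the upper bound $S(a)\le k-1$ on $[k-1,n-1]$.

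What the paper does instead is $\de$-discretize both estimates. For \eqref{kaufman} the argument is purely combinatorial, with no Fourier analysis: an incidence count between $\de$-tubes $\ell_\de$ and $(n{-}k{+}1)$-dimensional $\de$-slabs orthogonal to the various $V$, where the codimension-$(k-1)$ computation above bounds, for each pair $(\ell,\ell_1)$ at distance $d$, how many $V\in\cV$ can see both in a common slab. For \eqref{falconer} the paper does use the Fourier transform on $\R^{2(n-1)}$, but via the high--low method rather than energy integrals: one shows the low-frequency part of $\sum_{V}\sum_{T\in\T_V}\psi_\bT$ is negligible using the $s$-Frostman spacing of $\T_V$, then groups the surviving high part by slab direction $W\in G(n-k,n-1)$ and controls two overlaps explicitly --- that of the dual slabs $\{W_\de^*\times W_\de^*\}_W$ away from the origin, and (back in physical space) the at most $\de^{-(n-k)}$ many $V$'s contributing a slab of a fixed direction. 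The extra $+(n-k)$ in \eqref{falconer} is exactly this last count, not an energy saving.
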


\begin{remark}
    {\rm
    For some technical reason, here we use $A_\mu, G_\mu$ (instead of $A(1,n), G(k,n)$) to make that for any $\ell\in A_\mu$ and $ V\in G_\mu$, $\pi_V(\ell)$ is a line.

    The proof of \eqref{kaufman} relies on a tube-slab incidence estimate. The proof of \eqref{falconer} relies on the Fourier analysis in $A(1,n)$, which is the main novelty of this paper. 

    There is another notable thing. To prove \eqref{classical}, we just need one of \eqref{kauf} or \eqref{falc}. However, to prove Proposition \ref{lowerprop}, we need both \eqref{kaufman} and \eqref{falconer}. Actually, \eqref{kaufman} implies \eqref{lowerineq1}, while \eqref{falconer} implies \eqref{lowerineq2}.

    Peres and Schlag \cite{peres2000smoothness} explored projection problems within a broader context. For an introduction to this method, we recommend \cite[Chapter 18]{mattila2015fourier}. Peres and Schlag introduced the concept of the ``transversality condition" for a family of general projection maps. They also established Marstrand-type estimates and exceptional set estimates when this condition is met.
One might ponder whether we can employ the approach of Peres and Schlag, involving the definition of general projection maps and the verification of the transversality condition, for our specific problem. However, in our case, it does not yield the precise estimate we require.
    }
\end{remark}

In Section \ref{sec2}, we introduce the notation. In Section \ref{sec3}, we show that Theorem \ref{exthm} implies Proposition \ref{lowerprop}. In Section \ref{sec4}, we prove the Kaufman-type estimate \eqref{kaufman}. In Section \ref{sec5}, we introduce the Fourier transform in $A(1,n)$ and then prove the Falconer-type estimate \eqref{falconer}.

\section{Preliminary}\label{sec2}

We discuss the properties of affine Grassmannians and the metric on it. Most of the content in this section is from \cite[Section 1.2]{gan2023hausdorff}. See also in \cite[Section 4.1]{gan2023exceptional}.

\medskip

\subsection{Notation and some useful lemmas}

We will frequently use the following definitions.
\begin{definition}\label{dessetsd1}
For a number $\de>0$ and any set $X$ (in a metric space), we use $|X|_\de$ to denote the maximal number of $\de$-separated points in $X$.
\end{definition}

\begin{definition}\label{dessetsd2}
Let $\de,s>0$. We say $A\subset \R^n$ is a $(\de,s,C)$\textit{-set} if it is $\de$-separated and satisfies the following estimate:
\begin{equation}\label{deltas}
    \# (A\cap B_r(x)) \le C (r/\de)^s.
\end{equation}
for any $x\in \R^n$ and $1\ge r\geq \de$. In this paper, the constant $C$ is not important, so we will just say $A$ is a $(\de,s)$-set if
\[ \#(A\cap B_r(x))\lesssim (r/\de)^s \]
for any $x\in \R^n$ and $1\ge r\geq \de$.
\end{definition}

\begin{remark}
    {\rm
    We remark that we make ``$\de$-separated" as a part of the definition for a $(\de,s)$-set. 
    }
\end{remark}

\begin{lemma}
Let $\de,s>0$ and let $B\subset \R^n$ be any set with $\mathcal H^s_\infty(B) =: \kappa >0$. Then, there exists a $(\de,s)$-set $P\subset B$ with $\#P\gtrsim \kappa \de^{-s}$.
\end{lemma}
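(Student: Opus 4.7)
The plan is to discretize a Frostman-type measure on $B$ using a maximal $\delta$-separated set, then extract a subset with comparable local masses via dyadic pigeonholing.

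First, I would apply the Frostman lemma for Hausdorff content: since $\mathcal{H}^s_\infty(B)=\kappa>0$, there is a Borel measure $\mu$ supported on $B$ satisfying $\mu(B)\gtrsim \kappa$ and $\mu(B_r(x))\le r^s$ for every $x\in\R^n$ and $r>0$. This is a standard companion to the definition of Hausdorff content (see e.g.\ Mattila's book, Chapter 8), and it is the bridge that converts the content hypothesis into a workable pointwise local bound.

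Next, I would let $P_0$ be a maximal $\delta$-separated subset of $B$. By maximality, $\{B_\delta(p)\}_{p\in P_0}$ covers $B$, so
\[
\sum_{p\in P_0}\mu(B_\delta(p))\ge \mu(B)\gtrsim \kappa.
\]
Partition $P_0$ dyadically by mass, $P_{0,j}:=\{p:2^{-j-1}\delta^s<\mu(B_\delta(p))\le 2^{-j}\delta^s\}$. Only $O(\log(1/\delta))$ values of $j$ are active (since $\mu(B_\delta(p))\le\delta^s$), so by pigeonhole there is an index $j^*$ with $\sum_{p\in P_{0,j^*}}\mu(B_\delta(p))\gtrsim \kappa/\log(1/\delta)$.

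Finally, set $P:=P_{0,j^*}$. Summing the comparable masses yields $\#P\gtrsim \kappa\,2^{j^*}\delta^{-s}/\log(1/\delta)$, and for any $x\in\R^n$ and $r\ge\delta$ the bounded-multiplicity overlap of $\{B_\delta(p)\}_{p\in P}$ (coming from the $\delta$-separation of $P$) together with the Frostman bound give
\[
\#(P\cap B_r(x))\cdot 2^{-j^*-1}\delta^s\lesssim \mu(B_{2r}(x))\le (2r)^s,
\]
so $\#(P\cap B_r(x))\lesssim 2^{j^*}(r/\delta)^s$. Both the $2^{j^*}$ and $\log(1/\delta)$ factors are harmless under the convention of Definition \ref{dessetsd2}, where the authors explicitly declare the constant in the $(\delta,s)$-condition unimportant. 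The main technical subtlety is exactly this logarithmic loss: if one insists on absolute constants in both bounds, the standard remedy is to instead prune $P_0$ iteratively, excising each ``heavy'' ball $B_r(x)$ that violates the $(\delta,s)$-condition and charging each excision against the Hausdorff content of a coarsened cover of $B$ (replacing the small $\delta$-balls inside $B_r(x)$ by the single ball $B_r(x)$ itself).
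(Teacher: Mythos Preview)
The paper does not prove this lemma; it simply cites \cite[Lemma~3.13]{fassler2014restricted}. So there is nothing in the paper itself to compare your argument against beyond the bare statement.

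Your Frostman-plus-pigeonhole route is the natural first pass, but the claim that the $2^{j^*}$ and $\log(1/\delta)$ factors are ``harmless under the convention of Definition~\ref{dessetsd2}'' misreads that convention: when the authors say the constant $C$ is ``not important,'' they mean they will suppress it in the notation, not that it is permitted to depend on $\delta$. Your set $P=P_{0,j^*}$ is only a $(\delta,s,O(2^{j^*}))$-set, and $j^*$ may grow with $\delta^{-1}$, so the displayed argument proves a strictly weaker statement than the lemma. (Separately, the justification you give for ``only $O(\log(1/\delta))$ values of $j$ are active''---namely $\mu(B_\delta(p))\le\delta^s$---only shows $j\ge 0$; bounding $j$ from above requires discarding points of negligible mass, which in turn needs a bound on $\#P_0$.)

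You are right that iterative pruning is the remedy, and that is essentially the argument in the cited reference. One word of caution about your sketch: the most na\"{\i}ve version---start from a maximal $\delta$-separated $P_0\subset B$, excise each heavy ball $B_r(x)$, and coarsen the cover---does not close on its own when $\#P_0\gg\kappa\delta^{-s}$, because the charging bound $\sum_i r_i^s\le(\#P_0-\#P)\delta^s/C$ then fails to force $\#P\gtrsim\kappa\delta^{-s}$. The clean way through keeps the Frostman measure in play throughout: for instance, select each $p\in P_0$ with weight $\mu(B_\delta(p))/\delta^s\le 1$, so that both the cardinality lower bound and the ball-count upper bound are governed by $\mu$ (total mass $\sim\kappa$, local mass $\lesssim r^s$) rather than by the possibly enormous $\#P_0$. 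Your final paragraph is pointing in the right direction; it just needs this extra ingredient to actually avoid the loss.
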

\begin{proof}
See \cite[Lemma 3.13]{fassler2014restricted}.
\end{proof}

\begin{lemma}\label{frostmans} Fix $a>0$.
Let $\nu$ be a probability measure satisfying $\nu(B_r)\lesssim r^a$ for any $B_r$ being a ball of radius $r$. If $A$ is a set satisfying $\nu(A)\ge \kappa$ ($\kappa>0$), then for any $\de>0$ there exists a subset $F\subset A$ such that $F$ is a $(\de,a)$-set and $\# F\gtrsim \kappa \de^{-a}$.
\end{lemma}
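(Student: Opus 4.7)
The plan is to reduce this lemma to the immediately preceding lemma (the Fässler--Orponen construction of a $(\delta,s)$-set inside any set with $\mathcal H^s_\infty$-content $\kappa$). The bridge will be the elementary inequality $\mathcal H^a_\infty(A)\gtrsim \kappa$, derived from the Frostman upper bound on $\nu$.

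First I would verify the following mass comparison: the hypothesis $\nu(B_r)\lesssim r^a$ implies $\nu(U)\lesssim \operatorname{diam}(U)^a$ for every bounded set $U$, because any such $U$ is contained in a ball of radius $\operatorname{diam}(U)$ centered at one of its points. Consequently, for any countable cover $A\subset \bigcup_i U_i$, one has
\[
\kappa \le \nu(A) \le \sum_i \nu(U_i) \le C\sum_i \operatorname{diam}(U_i)^a.
\]
Taking the infimum over all such covers gives $\mathcal H^a_\infty(A)\ge \kappa/C$, so the Hausdorff $a$-content of $A$ is $\gtrsim \kappa$.

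Next I would simply invoke the preceding lemma (quoted from \cite[Lemma 3.13]{fassler2014restricted}) with $s=a$, $B=A$, and $\kappa'=\mathcal H^a_\infty(A)\gtrsim \kappa$. That lemma produces a $(\delta,a)$-set $F\subset A$ with
\[
\#F \gtrsim \mathcal H^a_\infty(A)\,\delta^{-a} \gtrsim \kappa\,\delta^{-a},
\]
which is exactly the conclusion we need. The $\delta$-separation of $F$ and the local ball-counting bound $\#(F\cap B_r(x))\lesssim (r/\delta)^a$ are built into the definition of $(\delta,a)$-set and are provided directly by the cited lemma.

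In terms of where the real work sits: almost none of it is here. The combinatorial construction of the $(\delta,a)$-set has been delegated to the Fässler--Orponen lemma, and the only thing I need to check is the Frostman-to-content reduction, which is a one-line application of the ball-covering trick. There is no obstacle to speak of; the one small subtlety is simply to note that the Frostman inequality for balls extends to arbitrary covering sets via an enclosing ball, so the inequality holds against the infimum defining $\mathcal H^a_\infty$.
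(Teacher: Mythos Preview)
Your proposal is correct and follows exactly the same route as the paper: reduce to the preceding F\"assler--Orponen lemma by showing $\mathcal H^a_\infty(A)\gtrsim \kappa$ via the covering argument $\kappa\le \nu(A)\le \sum_i \nu(U_i)\lesssim \sum_i \operatorname{diam}(U_i)^a$. The paper's proof is in fact slightly terser than yours; there is nothing to add.
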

\begin{proof}
By the previous lemma, we just need to show $\cH^a_\infty(A)\gtrsim \kappa$. We just check it by definition. For any covering $\{B\}$ of $A$, we have
$$\kappa\le \sum_B\nu(B)\lesssim \sum_B r(B)^a. $$
Ranging over all the covering of $A$ and taking infimum, we get $$\kappa\lesssim \cH^a_\infty(A).$$
\end{proof}

\begin{lemma}\label{usefullemma}
Suppose $X\subset [0,1]^2$ with $\dim X< s$. Then for any $\e>0$, there exist dyadic squares $\cC_{2^{-k}}\subset \cD_{2^{-k}}$ $(k>0)$ so that 
\begin{enumerate}
    \item $X\subset \bigcup_{k>0} \bigcup_{D\in\cC_{2^{-k}}}D, $
    \item $\sum_{k>0}\sum_{D\in\cC_{2^{-k}}}r(D)^s\le \e$,
    \item $\cC_{2^{-k}}$ satisfies the $s$-dimensional condition: For $l<k$ and any $D\in \cD_{2^{-l}}$, we have $\#\{D'\in\cC_{2^{-k}}: D'\subset D\}\le 2^{(k-l)s}$.
\end{enumerate}
\end{lemma}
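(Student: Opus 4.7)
The plan is a standard two-step construction: first produce any dyadic covering of $X$ with small $s$-content using the hypothesis $\dim X < s$, then refine it via a maximal-ancestor construction to obtain the packing bound (3).

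\textbf{Initial covering.} Since $\dim X < s$ gives $\mathcal{H}^s(X) = 0$, for any $\epsilon > 0$ we cover $X$ by balls with total $s$-content $< \epsilon/C$, replace each ball by $O(1)$ dyadic squares of comparable size, and drop any square nested in another. This yields a pairwise disjoint dyadic covering $\cC^{(0)} = \bigsqcup_{k>0} \cC^{(0)}_{2^{-k}}$ of $X$ with $\sum_{D \in \cC^{(0)}} r(D)^s < \epsilon$.

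\textbf{Refinement by maximal ancestors.} For each dyadic $D \subseteq [0,1]^2$, set
\[ \Sigma(D) := \sum_{D' \in \cC^{(0)},\, D' \subseteq D} r(D')^s, \qquad \cA := \{D : \Sigma(D) \ge r(D)^s\}. \]
Let $\cA^*$ denote the containment-maximal elements of $\cA$; two distinct elements of $\cA^*$ cannot be nested (the smaller would not be maximal), so $\cA^*$ is a disjoint family. Define
\[ \cC := \cA^* \sqcup \{D \in \cC^{(0)} : D \not\subseteq D^* \text{ for any } D^* \in \cA^*\}. \]
Every $x \in X$ lies in some $D \in \cC^{(0)}$, which is either absorbed by an $\cA^*$-ancestor (so $x$ lies in that ancestor) or survives in $\cC$, giving (1). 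For (2), disjointness of $\cA^*$ together with $r(D^*)^s \le \Sigma(D^*)$ yields
\[ \sum_{D \in \cC} r(D)^s \le \sum_{D^* \in \cA^*} \Sigma(D^*) + \sum_{\substack{D \in \cC^{(0)} \\ \text{unabsorbed}}} r(D)^s = \sum_{D \in \cC^{(0)}} r(D)^s < \epsilon. \]

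\textbf{Packing bound (3).} It suffices to show that for every dyadic $D$ at scale $2^{-l}$, the $\cC$-squares strictly inside $D$ have total $s$-content at most $r(D)^s$; restricting to the scale-$2^{-k}$ contribution then gives $n_k(D) \cdot 2^{-ks} \le r(D)^s$, i.e., $n_k(D) \le 2^{(k-l)s}$. If $D \in \cA$, any hypothetical $\cA^*$-element strictly inside $D$ would be properly contained in $D \in \cA$, contradicting maximality; and any $\cC^{(0)}$-square strictly inside $D$ is absorbed by an $\cA^*$-ancestor of $D$, so no $\cC$-square lies strictly inside $D$. If $D \notin \cA$, the $\cA^*$-pieces strictly inside $D$ are disjoint, each has $s$-content at most its $\Sigma$-value, and the unabsorbed $\cC^{(0)}$-squares strictly inside $D$ account for the remaining $\cC^{(0)}$-mass in $D$; together the two contributions are bounded by $\Sigma(D) < r(D)^s$.

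The main subtlety is the case analysis for (3): the antichain structure of $\cA^*$ prevents any $\cC$-mass from concentrating inside a dyadic $D$ beyond what $\Sigma(D)$ permits, which is exactly the packing content needed for the density bound.
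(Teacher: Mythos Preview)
Your proof is correct; it is the standard stopping-time (maximal-ancestor) construction. The paper itself does not argue this lemma at all but simply cites \cite[Lemma~2]{gan2022restricted}, where essentially the same argument appears, so your write-up in fact supplies what the paper outsources.

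Two small observations. First, since you take $\cC^{(0)}$ pairwise disjoint, every $D\in\cC^{(0)}$ already satisfies $\Sigma(D)=r(D)^s$ and hence lies in $\cA$; thus every $\cC^{(0)}$-square is absorbed, $\cC=\cA^*$, and the ``unabsorbed'' branch of your construction is vacuous (harmless, but you can drop it). Second, you should arrange the initial $s$-content to be below $\min(\e,1)$ so that $[0,1]^2\notin\cA$; this guarantees that maximal elements of $\cA$ exist and lie at scales $k\ge 1$, as the statement requires.
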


\begin{proof}
    See \cite[Lemma 2]{gan2022restricted}.
\end{proof}

\subsection{Metric on affine Grassmannian}

For every $k$-plane $V\in A(k,n)$, we can uniquely write it as
\[ V=\dir(V)+x_V, \]
where $\dir(V)\in G(k,n)$ and $x_V\in V^\perp$. $\dir(V)$ refers to the direction of $V$, as can be seen that $\dir(V)=\dir(V')\Leftrightarrow V\parallel V'$.

In this paper, we use $\Aloc(k,n)$ to denote the set of $k$-planes $V$ such that $x_V\in B^n(0,1/2)$. ($B^n(0,1/2)$ is the ball of radius $1/2$ centered at the origin in $\R^n$.)
\begin{equation}\label{Aloc}
    \Aloc(k,n)=\{V: V\tx{~is~a~}k\tx{~dimensional~plane~}, x_V\in B^n(0,1/2)\}. 
\end{equation} 
Later in our proof, instead of considering $A(k,n)$, we only care about those $V$ lying near the origin. This is through a standard localization argument.

Next, we discuss the metrics on $G(k,n)$ and $\Aloc(k,n)$.
For $V_1, V_2\in G(k,n)$, we define
\[ d(V_1,V_2)=\|\pi_{V_1}-\pi_{V_2}\|. \]
Here, $\pi_{V_1}:\R^n\rightarrow V_1$ is the orthogonal projection. We have another characterization for this metric. Define $\rho(V_1,V_2)$ to be the smallest number $\rho$ such that $B^n(0,1)\cap V_1\subset N_{\rho}(V_2)$. We have the comparability of $d(\cdot,\cdot)$ and $\rho(\cdot,\cdot)$.

\begin{lemma}
    There exists a constant $C>0$ (depending on $k,n$) such that
    \[ \rho(V_1,V_2)\le  d(V_1,V_2)\le  C\rho(V_1,V_2).  \]
\end{lemma}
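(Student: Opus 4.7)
The plan is to prove the two inequalities separately. The lower bound $\rho(V_1,V_2)\le d(V_1,V_2)$ is essentially immediate: for any $x\in V_1\cap B^n(0,1)$ one has $\pi_{V_1}(x)=x$, so
\[ \dist(x,V_2)=|x-\pi_{V_2}(x)|=|(\pi_{V_1}-\pi_{V_2})(x)|\le \|\pi_{V_1}-\pi_{V_2}\||x|\le d(V_1,V_2). \]
This exhibits $V_1\cap B^n(0,1)\subset N_{d(V_1,V_2)}(V_2)$, so $\rho(V_1,V_2)\le d(V_1,V_2)$ follows from the minimality in the definition of $\rho$.

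For the upper bound, fix a unit $x\in\R^n$ and decompose $x=y+z$ with $y=\pi_{V_1}(x)\in V_1$ and $z=\pi_{V_1^\perp}(x)\in V_1^\perp$, noting $|y|^2+|z|^2=1$. Then $(\pi_{V_1}-\pi_{V_2})(x)=(y-\pi_{V_2}(y))-\pi_{V_2}(z)$, and the two summands lie in the orthogonal subspaces $V_2^\perp$ and $V_2$ respectively, so by Pythagoras
\[ |(\pi_{V_1}-\pi_{V_2})(x)|^2=|y-\pi_{V_2}(y)|^2+|\pi_{V_2}(z)|^2. \]
Since $V_1,V_2$ are linear subspaces through the origin, the defining inclusion for $\rho$ upgrades by scaling to $\dist(v,V_2)\le \rho(V_1,V_2)|v|$ for every $v\in V_1$, giving $|y-\pi_{V_2}(y)|\le\rho(V_1,V_2)|y|$; analogously, using $\pi_{V_2}+\pi_{V_2^\perp}=\mathrm{id}$ one writes $|\pi_{V_2}(z)|=\dist(z,V_2^\perp)\le\rho(V_1^\perp,V_2^\perp)|z|$.

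The step I expect to require the most care is the identity $\rho(V_1^\perp,V_2^\perp)=\rho(V_1,V_2)$ for $V_1,V_2$ of the same dimension. I would establish it via principal angles: choosing SVD-aligned orthonormal bases $u_1,\dots,u_k$ of $V_1$ and $w_1,\dots,w_k$ of $V_2$ with $u_i\cdot w_j=(\cos\theta_i)\delta_{ij}$ and $0\le\theta_1\le\cdots\le\theta_k\le\pi/2$, a short calculation on $u=\sum c_iu_i$ with $\sum c_i^2=1$ yields $|u-\pi_{V_2}(u)|^2=\sum c_i^2\sin^2\theta_i$, hence $\rho(V_1,V_2)=\sin\theta_k$. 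The analogous computation in the complementary pair $V_1^\perp,V_2^\perp$ (whose nonzero principal angles coincide with those of $V_1,V_2$, augmented by zero angles when $n\ne 2k$) gives $\rho(V_1^\perp,V_2^\perp)=\sin\theta_k$ as well. Plugging back,
\[ |(\pi_{V_1}-\pi_{V_2})(x)|^2\le \rho(V_1,V_2)^2(|y|^2+|z|^2)=\rho(V_1,V_2)^2, \]
so taking the supremum over unit $x$ gives $d(V_1,V_2)\le\rho(V_1,V_2)$, establishing the comparison (in fact sharply, with $C=1$).
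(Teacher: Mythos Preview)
Your proof is correct. The lower bound $\rho(V_1,V_2)\le d(V_1,V_2)$ is handled exactly as in the paper: for $x\in V_1\cap B^n(0,1)$ one has $\pi_{V_1}(x)=x$, and the displayed inequality follows. For the upper bound, however, the paper simply asserts that $B^n(0,1)\cap V_1\subset N_\rho(V_2)$ implies $|\pi_{V_1}(v)-\pi_{V_2}(v)|\lesssim\rho|v|$ for all $v$, with no further explanation and only an implicit constant. You instead split $x=y+z$ along $V_1\oplus V_1^\perp$, use Pythagoras in $V_2\oplus V_2^\perp$, and invoke the principal-angle identity $\rho(V_1^\perp,V_2^\perp)=\rho(V_1,V_2)$ to close the estimate. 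This is a genuinely different (and more informative) route: it supplies the step the paper leaves implicit, and it yields the sharp constant $C=1$, i.e.\ $d(V_1,V_2)=\rho(V_1,V_2)=\sin\theta_{\max}$ for equal-dimensional subspaces. The cost is appealing to the SVD/principal-angle machinery, whereas the paper's version is content with a soft comparability statement that suffices for every later use in the paper.
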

\begin{proof}
    Suppose $B^n(0,1)\cap V_1\subset N_\rho(V_2)$, then for any $v\in\R^n$, we have
    \[ |\pi_{V_1}(v)-\pi_{V_2}(v)|\lesssim \rho|v|, \]
    which implies $d(V_1,V_2)\lesssim \rho$. On the other hand, if for any $|v|\le 1$ we have
    \[ |\pi_{V_1}(v)-\pi_{V_2}(v)|\le d|v|, \]
    then we obtain that $\pi_{V_1}(v)\subset N_{d}(V_2)$. Letting $v$ ranging over $B^n(0,1)\cap V_1$, we get $B^n(0,1)\cap V_1\subset N_{d}(V_2)$, which means $\rho(V_1,V_2)\le  d$. 
\end{proof}

We can also define the metric on $\Aloc(k,n)$ given by
\begin{equation}\label{defdist}
    d(V,V')=d(\dir(V),\dir(V'))+|x_{V}-x_{V'}|. 
\end{equation} 
Here, we still use $d$ to denote the metric on $\Aloc(k,n)$ and it will not make any confusion.

Similarly, for $V,V'\in \Aloc(k,n)$ we can define $\rho(V,V')$ to be the smallest number $\rho$ such that $B^n(0,1)\cap V\subset N_\rho(V')$. We also have the following lemma. We left the proof to the interested readers. 

\begin{lemma}\label{comparablelem}
    There exists a constant $C>0$ (depending on $k,n$) such that for $V,V'\in \Aloc(k,n)$,
    \[ C^{-1} d(V,V')\le \rho(V,V')\le C d(V,V').  \]
\end{lemma}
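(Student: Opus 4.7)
The plan is to prove each inequality directly from the definitions, reducing to the subspace case (the previous lemma in the excerpt) wherever convenient. The upper bound $\rho(V,V') \lesssim d(V,V')$ is the easy direction: for any $v \in B^n(0,1)\cap V$ I would write $v = x_V + w$ with $w \in \dir(V)$. Since $|x_V| \le 1/2$, the tangent vector $w$ has bounded norm, so applying the subspace version of the lemma to $\dir(V)$ and $\dir(V')$ yields some $w' \in \dir(V')$ with $|w-w'| \lesssim d(\dir(V),\dir(V'))$. Then $x_{V'} + w' \in V'$ approximates $v$ to within $|x_V - x_{V'}| + |w-w'| \lesssim d(V,V')$, giving $\rho(V,V') \lesssim d(V,V')$.

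For the lower bound $d(V,V') \lesssim \rho(V,V')$, set $\rho := \rho(V,V')$ and attack in two stages. First I bound the direction part: for any $w \in \dir(V)$ with $|w|$ small enough (say $|w| \le 1/4$), both $x_V$ and $x_V + w$ lie in $B^n(0,1) \cap V$, hence each is within $\rho$ of some point of $V'$; subtracting the two approximating points yields a vector in $\dir(V')$ within $2\rho$ of $w$. After rescaling this gives $\rho(\dir(V),\dir(V')) \lesssim \rho$, and the previous lemma upgrades it to $d(\dir(V),\dir(V')) \lesssim \rho$.

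Second, I bound $|x_V - x_{V'}|$. The crucial input is that $x_V$ itself lies in $B^n(0,1) \cap V$, so $\dist(x_V, V') \le \rho$. Decomposing $x_V - x_{V'} = u + v$ with $u \in \dir(V')$ and $v \in \dir(V')^\perp$, the perpendicular piece satisfies $|v| = \dist(x_V, V') \le \rho$. For the parallel piece I use the defining orthogonality relations $x_V \perp \dir(V)$ and $x_{V'} \perp \dir(V')$, which together yield the identity $u = \pi_{\dir(V')}(x_V) - \pi_{\dir(V)}(x_V)$; hence $|u| \le d(\dir(V),\dir(V'))\, |x_V| \lesssim \rho$ by Stage~1 and $|x_V| \le 1/2$. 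Adding $|u|+|v|$ and combining with Stage~1 yields $d(V,V') \lesssim \rho$. The main thing to keep straight is the algebraic identity for $u$ in Stage~2, together with the observation that the localization hypothesis $|x_V| \le 1/2$ built into $\Aloc(k,n)$ is exactly what converts a small direction change into a small change of the affine offset; beyond this bookkeeping there is no substantial obstacle.
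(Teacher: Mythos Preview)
Your proof is correct. The paper does not actually supply a proof of this lemma (it states ``We left the proof to the interested readers''), so there is nothing to compare against; your argument fills the gap cleanly, and in particular the identity $u = \pi_{\dir(V')}(x_V) - \pi_{\dir(V)}(x_V)$ together with the localization bound $|x_V|\le 1/2$ is exactly the right mechanism for controlling the affine offset.
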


\begin{definition}\label{Vr}
    For $V\in A(k,n)$ and $0<r<1$, we define 
    \[V_r:=N_r(V)\cap B^n(0,1).  \]
We say that $V_r$ is a \textbf{$k$-dimensional~$r$-slab}.
\end{definition}
Actually, $V_r$ is morally a slab of dimensions $\underbrace{r\times \dots\times r}_{k \textup{~times}}\times \underbrace{1\times \dots\times 1}_{n-k \tx{~times}}$. When $k$ is already clear, we simply call $V_r$ an $r$-slab.
If $W$ is a convex set such that $C^{-1} W\subset V_r\subset C W$, then we also call $W$ an $r$-slab. Here, the constant $C$ will be a fixed large constant.

\begin{definition}
    For two $r$-slab $V_r$ and $V'_r$. We say they are comparable if $C^{-1}V_r\subset V'_r\subset C V_r$. We say they are essentially distinct if they are not comparable.
\end{definition}

In this paper, we will also consider the balls and $\de$-neighborhood in $\Aloc(k,n)$. Recall that we use $B_r(x)$ to denote the ball in $\R^n$ of radius $r$ centered at $x$. To distinguish the ambient space, we will use letter $Q$ to denote the balls in $\Aloc(k,n)$. For $V\in \Aloc(k,n)$, we use $Q_r(V)$ to denote the ball in $\Aloc(k,n)$ of radius $r$ centered at $V$. More precisely,
\[ Q_r(V_0):=\{V\in \Aloc(k,n): d(V,V_0)\le r  \}. \]
For a subset $X\subset \Aloc(k,n)$, we use the fancy letter $\cN$ to denote the neighborhood in $\Aloc(k,n)$:
\[ \cN_r(X):=\{V\in \Aloc(k,n): d(V,X)\le r\}. \]
Here, $d(V,X)=\inf_{V'\in X}d(V,V')$.

\bigskip

\subsection{Hausdorff dimension on metric space}

We briefly discuss how to define the Hausdorff dimension for subsets of a metric space. Let $(M,d)$ be a metric space. For $X\subset M$, we denote the $s$-dimensional Hausdorff measure of $X$ under the metric $d$ to be $\cH^s(X;d)$. 
We see that if $d'$ is another metric on $M$ such that $d(\cdot,\cdot)\sim d'(\cdot,\cdot)$, then $\cH^s(X;d)\sim \cH^s(X;d')$. It make sense to define the Hausdorff dimension of $X$ which is independent of the choice of comparable metrics:
\[ \dim X:= \sup\{s: \cH^s(X;d)>0\}. \]

\subsection{\texorpdfstring{$\de$}{}-discretized version: projections of lines onto \texorpdfstring{$k$}{}-planes} We first talk about the projections of points to $k$-planes. Let $x\in B^n(0,1/2)$ and $V$ be a $k$-plane. we see that the orthogonal projection of $x$ onto $V$ is $P_V(x)$, and the fiber of $P_V$ at $P_V(x)$ is the $(n-k)$-plane $P_V^{-1}(P_V(x))$. The $\de$-discretized version is to replace $x$ by a $\de$-ball $B_\de$ centered at $x$, and replace $P_V^{-1}(P_V(x))$ by an $(n-k)$-dimensional $\de$-slab $T=\bigg(P_V^{-1}(P_V(x))\bigg)_\de$. (Here, see Definition \ref{Vr}.)
We see that $T$ is orthogonal to $V$, and $T$ contains $B_\de$.

We want to generalize this $\de$-discretized notion for projections of lines. We meet a new issue about \textit{transversality}: the projection of a line onto a $k$-plane may be a point but not a line. To handle with this degenerate case, we need to restrict ourselves to subsets $A_\mu \subset \Aloc(1,n)$ and $G_\mu\subset G(k,n)$. Here, $0<\mu<1/100$ is a parameter. $A_\mu$ is a $\mu$-ball in $\Aloc(1,n)$ and $G_\mu$ is a $\mu$-ball in $G(k,n)$, so that they satisfy the following quantitative transversality: For any $\ell\in A_\mu$ and $V\in G_\mu$,
\[ \angle(\ell, V^\perp)>\mu. \]
(See also \eqref{transcond} in Theorem \ref{exthm}.) Now for any $\ell\in A_\mu$ and $V\in G_\mu$, we see that $\ell$ is quantitatively away from $V^\perp$, and hence $P_V(\ell)$ is a line in $V$, and there exits an $(n-k+1)$-plane of form $W=P_V(\ell)\oplus W'$ where $W'$ is an $(n-k)$-subspace orthogonal to $V$. We say that $W$ is \textbf{orthogonal to} $V$ \textbf{at} $P_V(\ell)$. More generally, we have the following definition.

\begin{definition}\label{deflineorth}
    Let $V\in G(k,n)$, $\ell'$ be a line in $V$. If an $(n-k+1)$-plane $W$ is of form $W=\ell'\oplus W'$ where $W'$ is an $(n-k)$-subspace orthogonal to $V$, then we say that $W$ is orthogonal to $V$ at $\ell'$.
\end{definition}

\begin{remark}
    {\rm Here are two ways to understand this notion in terms of preimage of $P_V$, or preimage of $\pi_V$.
    On the one hand, we see that 
    \[W=P_V^{-1}(\ell').\]
    On the other hand, we can build a subset 
$\bW$ of $A(1,n)$ from $W$ as
\[ \bW:=\{ \ell\in A(1,n): \ell\subset W, \ell \not\perp V \}. \]
    Then we also have 
    \[ \bW=\pi_V^{-1}(\ell')\cap A(1,n).\]
    }
\end{remark}

Next, we discuss the geometry of the $\de$-discretized version. We will constantly use the following heuristic.\\

\noindent
{\bf Heuristic.} Fix $0<\de<1$. Given $W\in \Aloc(k,n)$ (see \eqref{Aloc}), we have two $\de$-thickened versions of $W$. One is $W_\de$, which is a $k$-dimensional $\de$-slab in $B^n(0,1)$. The other is $Q_\de(W)$, which is a $\de$-ball in $\Aloc(k,n)$. By Lemma \ref{comparablelem} and ignoring some constant, we can morally think of $W_\de$ and $Q_\de(W)$ as follows:
\begin{equation}\label{heuristic}
\begin{split}
    Q_\de(W)&\approx \{W'\in\Aloc(k,n): W'\cap B^n(0,1)\subset W_\de  \},\\  
 W_\de&\approx \bigg(\bigcup_{W'\in Q_\de(W)}W'\bigg)\cap B^n(0,1). 
\end{split}
\end{equation} 
It is good to think of them as the same thing. The reader can consider the two $\de$-thickened versions for a point $x$: both of $Q_\de(\{x\})$ and $\{x\}_\de$ are $B^n(x,\de)$ (the ball of radius $\de$ centered at $x$), since we can identify $\Aloc(0,n)$ as a subset of $\R^n$.
\bigskip

Let us talk about the projections.
As before, let $\ell\in A_\mu$ and $V\in G_\mu$. We have two $\de$-thickened versions of $\ell$: $\ell_\de$ which is a subset of $B^n(0,1)$, and $Q_\de(\ell)$ which is a subset of $\Aloc(1,n)$. By the quantitative transversality condition between $\ell$ and $V$, we see that $P_V(\ell_\de)$ has dimensions $\sim \de\times \dots\times \de\times 1$, where the implicit constant depends on $\mu$. When $\mu$ is fixed, we may just ignore this implicit constant, so let us assume $P_V(\ell_\de)$ is a $\de$-tube. 

We have the $\de$-disretized version for Definition \ref{deflineorth}. Let $W=P_V(\ell)\oplus W'$ which is an $(n-k+1)$-plane orthogonal to $V$ at $P_V(\ell)$. We can morally think of $W_\de$ as $P_V(\ell_\de)\times W'$. We can also morally think of $W_\de$ as $P_V^{-1}(P_V(\ell_\de))\cap B^n(0,1)$. There are some features of $W_\de$: 
\begin{enumerate}
    \item It is an $(n-k+1)$-dimensional $\de$-slab;
    \item It contains $\ell_\de$;
    \item Its intersection with $V$ is a $\de$-tube $P_V(\ell_\de)$, and the other $(n-k)$ directions are orthogonal to $V$.
\end{enumerate}
We would like to say $W_\de$ is \textbf{orthogonal to} $V$ \textbf{at} $P_V(\ell_\de)$. See Figure \ref{orthatline}.

\begin{figure}[ht]
\centering
\begin{minipage}[b]{0.85\linewidth}
\includegraphics[width=11cm]{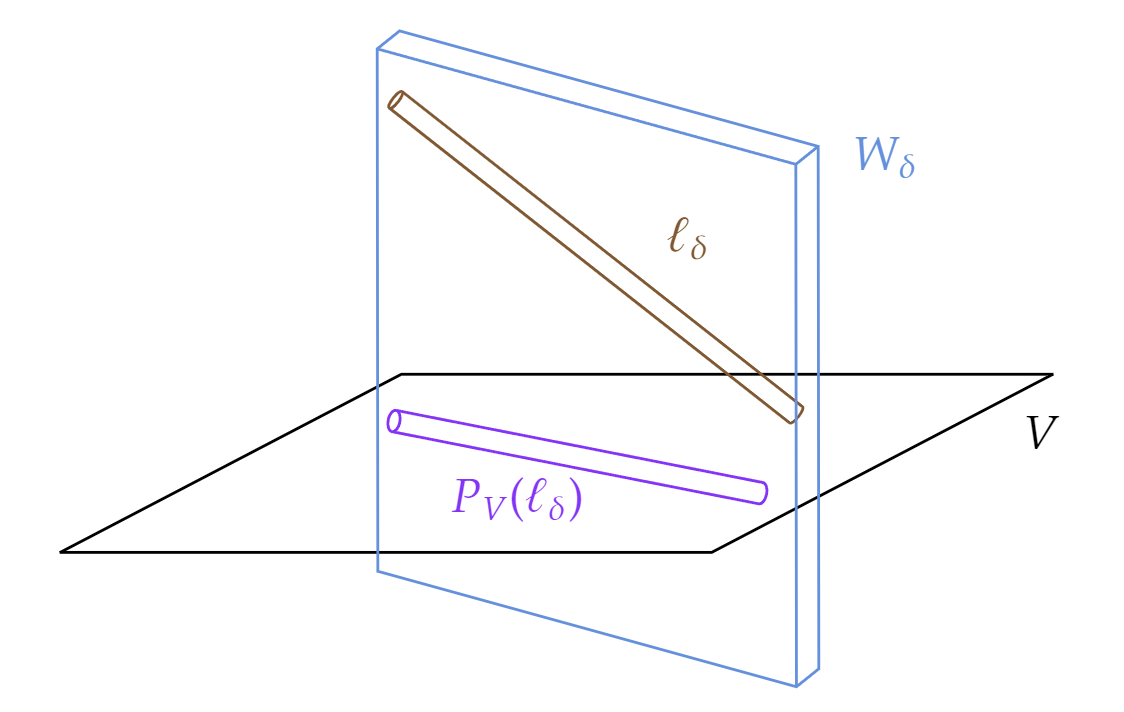}
\caption{}
\label{orthatline}
\end{minipage}
\end{figure}

\section{Theorem \ref{exthm} implies Proposition \ref{lowerprop}}\label{sec3}

We will use the Kaufman-type estimate \eqref{kaufman} to deduce \eqref{lowerineq1}, and use Falconer-type estimate \eqref{falconer} to deduce \eqref{lowerineq2}.

\medskip

We first prove \eqref{lowerineq1}. Since $S(a)$ is monotonically increasing in $a$, we can assume $a\in [0,k-1]$ and to prove 
\[ S(a)\ge a. \]
Suppose by contradiction that $S(a)\le a-2\e$ for some $\e>0$. From the definition of $S(a)$ in \eqref{defSa}, we can find a set $\bA\subset A(1,n)$ with $\dim(\bA)=a$ so that the set
\begin{equation}\label{stilltrue}
    E=\{V\in G(k,n): \dim(\pi_V(\bA))< a-\e\}
\end{equation}
has positive measure. Since $A(1,n)$ can be covered by countably many translated copies of $\Aloc(1,n)$, so we may assume $\bA\subset \Aloc(1,n)$ with $\dim(\bA)>a-\e/100$ and \eqref{stilltrue} still has positive measure.

Since $\dim(\bA)>a-\e/100$, there exists $\ell\in\bA$ such that 
\[ \dim(Q_r(\ell)\cap \bA)\ge a-\e/2, \]
for any $r>0$ and $Q_r(\ell)$ being a ball of radius $r$ centered at $\ell$ in the metric space $\Aloc(1,n)$. Usually, $\ell$ is referred to as an $(a-\e/2)$-density point in $\bA$. Without loss of generality, we assume $\ell$ is parallel to the $x_n$-axis. We also use $\ell_0$ to denote the translation of $\ell$ to the origin, which by our assumption, is the $x_n$-axis.

Since $\{V\in G(k,n): \ell_0 \subset V^\perp \}$ has zero measure as a subset of $G(k,n)$, we see that
\[ E\cap \{ V\in G(k,n): \ell_0\not \subset V^\perp  \}  \]
has positive measure. For any $V\in \{V\in G(k,n): \ell_0 \subset V^\perp \}$, there exists a number $\mu=\mu_V>0$ such that the following holds. Let $Q_{\mu}(\ell)\subset \Aloc(1,n)$ be a ball of radius $\mu$ centered at $\ell$, and $Q_{\mu}(V)\subset G(k,n)$ be a ball of radius $\mu$ centered at $V$. Then 
\[ \angle(\ell_1,V_1^\perp)>\mu \]
for any $\ell_1\in Q_{\mu}(\ell)$ and $V_1\in Q_{\mu}(V)$. This guarantees the transversality condition \eqref{transcond}. 
We will let $s=a-\e$. We can check
\[ s<\min\{ \dim(Q_r(\ell)\cap\bA)),2(k-1) \}. \] 
Then, we can apply Theorem \ref{exthm} to the set $\bA\cap Q_{\mu}(\ell)$. 
By \eqref{kaufman}, we obtain that
\begin{equation}
\begin{split}
    \dim(\{ V\in Q_\mu(V): \dim(\pi_V(\bA\cap Q_{\mu}(\ell)))<a-\e \})&\le k(n-k)+a-\e-(k-1)\\
    &\le k(n-k)-\e.
\end{split}
\end{equation}
The last inequality is because $a\le k-1$.
This implies
\begin{equation}\label{localupper}
    \dim(\{ V\in Q_\mu(V): \dim(\pi_V(\bA))<a-\e \})\le k(n-k)-\e. 
\end{equation} 
Since we can write $E\cap \{ V\in G(k,n): \ell_0\not\subset V^\perp \}$ as a countable union of the set of form
\[ \{ V\in Q_{\mu}(V): \dim(\pi_V(\bA))<a-\e \}. \]
Therefore, $\dim(E\cap \{ V\in G(k,n): \ell_0\not\subset V^\perp \})\le k(n-k)-\e$, which contradicts that $E\cap \{ V\in G(k,n): \ell_0\not\subset V^\perp \}$ has positive measure.

\medskip

Next. we prove \eqref{lowerineq2}. We can assume $a\in [n-1,n+k-2]$ and to prove
\[ S(a)\ge a-(n-k). \]
Arguing in the same way, we can find a set $\bA\subset \Aloc(1,n)$ with $\dim(\bA)>a-\e/100$ so that
\[ E=\{V\in G(k,n): \dim(\pi_V(\bA))<a-(n-k)-\e\} \]
has positive measure. By finding an $(a-\e/2)$-density point $\ell$ in $\bA$ and similarly defining $\ell_0$, we can assume
\[ E\cap \{V\in G(k,n):\ell_0\not\subset V^\perp\} \]
has positive measure.
We will let $s=a-(n-k)-\e$. We can check
\[ s<\min\{ a-\e/2,2(k-1) \}. \]
Then, we can apply Theorem \ref{exthm} and obtain a similar estimate as \eqref{localupper}:
\begin{equation}
\begin{split}
   &\ \ \ \dim(\{ V\in Q_\mu(V): \dim(\pi_V(\bA))\le a+(n-k)-\e \})\\
    &\le k(n-k)+(a-(n-k)-\e)-(a-\e/2)+(n-k)\\
    &\le k(n-k)-\e/2.
\end{split}
\end{equation}
Therefore, $\dim(E\cap \{ V\in G(k,n): \ell_0\not\subset V^\perp \})\le k(n-k)-\e/2$, which contradicts that $E\cap \{ V\in G(k,n): \ell_0\not\subset V^\perp \}$ has positive measure.

\begin{remark}
    {\rm
    We actually showed that the number
\begin{equation}
    \wt S(a):=\inf_{\bA\subset A(1,n), \dim(\bA)=a}\esssup_{V\in G(k,n)}\dim\Big(\pi_V(\bA)\cap A(1,V)\Big), 
\end{equation}
which looks smaller than $S(a)$ defined in \eqref{defSa}, actually has the same lower bound as $S(a)$ in Proposition \ref{lowerprop}.    
    }
\end{remark}

\section{Kaufman-type exceptional set estimate}\label{sec4}

We state a discretized version of the estimate \eqref{kaufman}.

\begin{theorem}\label{diskaufman}
Fix a number $0<\mu<1/100$, and let $A_\mu, G_\mu$ be as in Theorem \ref{exthm}.
    Fix $t>k(n-k)-(k-1)$, $0<s<a$ and $0<u<t-k(n-k)+(k-1)$. For sufficiently small $\e>0$ (depending on $\mu,n,k,a,s,t,u$), the following holds. Let $0<\de<1/100$. Let $\bH\subset A_\mu$ be a $(\de,u)$-set with $\#\bH\gtrsim (\log\de^{-1})^{-2}\de^{-u}$. Let $\cV\subset G_\mu$ be a $(\de,t)$-set with $\#\cV\gtrsim (\log\de^{-1})^{-2}\de^{-t}$. Suppose for each $V\in G(k,n)$, we have a collection of slabs $\T_V$, where each $T\in \T_V$ has dimensions $\underbrace{\de\times\dots\times \de}_{k-1\ \textup{times}}\times \underbrace{1\times \dots\times 1}_{n-k+1 \ \textup{times}}$ and is orthogonal to $V$ at some $\de$-tube in $V$. 
    
    The $s$-dimensional Forstman condition holds for $\T_V$: for any $\de\le r\le 1$ and any $(n-k+1)$-dimensional $r$-slab $W_r$ that is orthogonal to $V$ at some $r$-tube in $V$, we have
    \begin{equation}\label{sfrostman}
        \#\{T\in \T_V: T\subset W_r  \}\lesssim (r/\de)^s.
    \end{equation}
    
    We also assume that for each $\ell \in\bH$,
    \begin{equation}\label{tlower}
        \#\{ V\in \cV: \ell_\de\subset T, \textup{~for~some~}T\in\T_V  \}\gtrsim (\log\de^{-1})^{-2}\#\cV.
    \end{equation}
    Then, we have
    \begin{equation}\label{kaufest}
        \de^{-u-t}\lesssim_\e \de^{-\e/2}\sum_{V\in\cV}\#\T_V\lesssim \de^{-\e-t-s}.
    \end{equation}
    
\end{theorem}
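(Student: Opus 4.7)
The upper bound in \eqref{kaufest} follows immediately: the Frostman condition \eqref{sfrostman} applied with $r=1$ gives $\#\T_V \lesssim \de^{-s}$ for every $V$, and summing over the $\lesssim \de^{-t}$ many $V \in \cV$ yields $\sum_{V \in \cV}\#\T_V \lesssim \de^{-t-s}$, which is stronger than required. The substance of the theorem is therefore the lower bound $\de^{-u-t} \lesssim_\e \de^{-\e/2}\sum_V \#\T_V$.

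For this, my plan is to double-count the incidence set
\[ I := \{(\ell,V) \in \bH \times \cV : \ell_\de \subset T \text{ for some } T \in \T_V\}. \]
Summing \eqref{tlower} over $\ell \in \bH$ and using the size bounds on $\bH$ and $\cV$ gives $|I| \gtrsim (\log\de^{-1})^{-6}\de^{-u-t}$. On the other hand, setting $M_T^V := \#\{\ell \in \bH : \ell_\de \subset T\}$, we have the trivial bound $|I| \le (\max_{T,V} M_T^V) \cdot \sum_V \#\T_V$. The desired lower bound would follow from an estimate of the form $M_T^V \lesssim \de^{-\e/4}$, valid after a harmless refinement of $\bH$ and the $\T_V$'s that loses only $(\log\de^{-1})^{O(1)}$-factors in $|I|$.

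Establishing that multiplicity bound is the main obstacle. The set $\{\ell \in \Aloc(1,n) : \ell_\de \subset T\}$ is essentially the $\de$-neighborhood, inside the $2(n-1)$-dimensional space $\Aloc(1,n)$, of a $2(n-k)$-dimensional submanifold --- a ``line-slab'' of codimension $2(k-1)$. The parameter hypothesis $u < t-k(n-k)+(k-1)$ together with $t \le k(n-k) = \dim G(k,n)$ forces $u < k-1$, hence in particular $u < 2(k-1)$. Thus the line-slab codimension strictly exceeds $u$, so a \emph{uniform} $(\de,u)$-set would place only $(\log\de^{-1})^{O(1)}$ points in any single line-slab. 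Accordingly, I would refine $\bH$ via a dyadic pigeonhole in the spirit of Lemma \ref{usefullemma}, applied across all dyadic scales $\de \le r \le 1$ and across line-slabs orthogonal to the various $V \in G_\mu$, so that the refined $\bH$ is uniform against line-slabs; combined with the $s$-Frostman condition \eqref{sfrostman} at the same intermediate scales, this should yield $M_T^V \lesssim \de^{-\e/4}$. Plugging this into $\sum_V \#\T_V \ge |I|/\max_{T,V} M_T^V$ then delivers the lower bound, while the remaining steps (double counting, dyadic pigeonhole, and collecting logarithmic losses) are routine.
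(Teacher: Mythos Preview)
Your upper bound argument is correct and matches the paper's. The gap is in the lower bound, specifically in the claim that you can refine $\bH$ so that the multiplicity $M_T^V = \#\{\ell\in\bH:\ell_\de\subset T\}$ satisfies $M_T^V\lesssim\de^{-\e/4}$. The $(\de,u)$-set condition only controls the count of $\bH$ in metric \emph{balls}, not in $\de$-neighbourhoods of positive-dimensional submanifolds. Since the line-slab $\bT$ is a $\de$-neighbourhood of a $2(n-k)$-dimensional submanifold, a $(\de,u)$-set with $u<2(n-k)$ can sit \emph{entirely} inside a single $\bT$ and still satisfy the Frostman ball condition; in that case $M_T^V\sim\#\bH\sim\de^{-u}$ and no amount of dyadic pigeonholing can refine it away without discarding essentially all of $\bH$. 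More tellingly, your route uses the $(\de,t)$-structure of $\cV$ only through the cardinality bound $\#\cV\lesssim\de^{-t}$, whereas the hypothesis $u<t-k(n-k)+(k-1)$ couples $u$ and $t$ in a way that must enter the argument --- and it does not in yours.

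The paper's proof takes a different tack. It works with the incidence set $\cI=\{(T,V):T\in\T_V\}$ (so that $\#\cI=\sum_V\#\T_V$ directly), passes to a $\de(\log\de^{-1})^{O(1)}$-separated refinement $\bH'$ of $\bH$, and for each $\ell\in\bH'$ sets $\cI_\ell=\{(T,V)\in\cI:\ell_\de\subset T\}$. An inclusion--exclusion argument reduces matters to showing $\sum_{\ell_1\neq\ell}\#(\cI_\ell\cap\cI_{\ell_1})\le\tfrac12\#\cI_\ell$. The overlap is controlled geometrically: if $(T,V)\in\cI_\ell\cap\cI_{\ell_1}$ then $T$ contains the $1\times d$ plank $\cP$ spanned by $\ell,\ell_1$ (where $d\sim d(\ell,\ell_1)$), forcing $P_V(\cP)$ into a $\de$-tube; this confines $V$ to the $C\de d^{-1}$-neighbourhood of a submanifold $\cM\subset G(k,n)$ of dimension $1+k(n-k-1)=k(n-k)-(k-1)$. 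Now the $(\de,t)$-Frostman condition on $\cV$ gives $\#(\cI_\ell\cap\cI_{\ell_1})\lesssim(\de^{-1}d)^{k(n-k)-(k-1)}d^{-t}$, and summing over $\ell_1$ using the $(\de,u)$-condition on $\bH'$ yields a geometric series with exponent $u+k(n-k)-(k-1)-t<0$ --- precisely the coupling hypothesis. This is where the non-concentration of $\cV$, not of $\bH$, does the real work.
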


\subsection{Proof of the Kaufman-type estimate}
We give the proof of \eqref{kaufman} using Theorem \ref{diskaufman}. We choose $\al<\dim(\bA), t<\dim(E_s(\bA))$. By Frostman's lemma, there exist probability measures $\nu_{\bA}$ supported on $\bA$ and $\nu_E$ supported on $E_s(\bA)$ satisfying the Frostman's condition:
\begin{align}
    &\nu_\bA(Q_r)\lesssim r^\al \textup{~for~any~} Q_r \textup{~being~a~ball~of~radius~} r \textup{~in~}\Aloc(1,n). \\
    &\nu_E(Q_r)\lesssim r^t \textup{~for~any~} Q_r \textup{~being~a~ball~of~radius~} r \textup{~in~}G(k,n).
\end{align}

We only need to prove 
\begin{equation}
    t\le k(n-k)+s-(k-1),
\end{equation}
since then we can let $t\rightarrow \dim(E_s(\bA))$.

Fix a $V\in E_s(\bA)$. By definition, we have $\dim (\pi_V(\bA))<s$. We also fix a small number $\en$ which we will later send to $0$.
We view $\pi_V(\bA)$ as a subset of $\Aloc(1,V)=\Aloc(1,k)$. By Lemma \ref{usefullemma}, we can cover $\pi_V(\bA)$ by balls $\D_V=\{D\}$ in $\Aloc(1,V)$, each of which has radius $2^{-j}$ for some integer $j>|\log_2\en|$. We define $\mathbb D_{V,j}:=\{D\in\mathbb D_V: r(D)=2^{-j}\}$.
Lemma \ref{usefullemma} yields the following properties:
\begin{equation}\label{rsless1}
    \sum_{D\in\mathbb D_V}r(D)^s<1,
\end{equation}
and for each $j$ and $r$-ball $Q_r\subset \Aloc(1,V)$, we have
\begin{equation}\label{structure}
    \#\{D\in \D_{V,j}: D\subset Q_r\}\lesssim \left(\frac{r}{2^{-j}}\right)^s.
\end{equation}
On the one hand, $D$ is a $2^{-j}$-ball in $\Aloc(1,V)$. On the other hand we can view $D$ as a $2^{-j}$-tube in $V$. We use $t_D$ to denote this $2^{-j}$-tube. By the heuristic \eqref{heuristic}, we can view $t_D$ as
\[ \bigcup_{\ell\in D}\ell \cap \{x\in V: |x|\le 1\}. \]

For each $V\in E_s(A)$, we can find such a $\D_V=\bigcup_j\D_{V,j}$. We also define the slabs $\T_{V,j}:=\{\pi^{-1}_V(t_D)\cap B^n(0,1): D\in\D_{V,j}\}$, $\T_{V}=\bigcup_j\T_{V,j}$. Each plank in $\T_{V,j}$ has dimensions 
\[
\underbrace{2^{-j}\times 2^{-j} \times \dots \times 2^{-j}}_{k-1 \text{~times}}\times \underbrace{1 \times 1 \times \dots \times 1}_{n-k+1 \text{~times}}
\]
such that it is orthogonal to $V$ at some $t_D$ for $D\in\D_{V,j}$. For each such slab $T\in \T_{V,j}$, we use its bold-font $\bT$ to denote the set of lines whose unit truncations are in $T$. More precisely,
\begin{equation}
    \bT:=\{\ell: \ell\cap B^n(0,1) \subset T \}.
\end{equation}

One easily sees that $\bA\subset \bigcup_{T\in \T_V}\bS $. By pigeonholing, there exists $j(V)$ such that
\begin{equation}\label{kaufpigeon1}
    \nu_\bA(\bA\cap(\cup_{T\in\T_{V,j(V)}}\bT ))\ge \frac{1}{10j(V)^2}\nu_\bA(\bA)=\frac{1}{10j(V)^2}.
\end{equation} 
For each $j>|\log_2\en|$, define $E_{s,j}(A):=\{V\in E_s(A): j(V)=j\}$. Then we obtain a partition of $E_s(\bA)$:
\[
E_s(\bA)=\bigsqcup_j E_{s,j}(\bA).
\]
By pigeonholing again, there exists $j$ such that
\begin{equation}\label{kaufpigeon2}
    \nu_E(E_{s,j}(\bA))\ge \frac{1}{10j^2}\nu_E(E_s(\bA))= \frac{1}{10j^2}. 
\end{equation} 
In the rest of the proof, we fix this $j$. We also set $\de=2^{-j}$. By Lemma \ref{frostmans}, there exists a $(\de,t)$-set $\mathcal V\subset E_{s,j}(\bA)$ with cardinality $\#\mathcal V\gtrsim (\log\de^{-1})^{-2}\de^{-t}$.

Next, we consider the set $M:=\{(\ell,V)\in \bA\times \mathcal V: \ell\in\cup_{T\in\T_{V,j}}\bT \}$. We also use $\#$ to denote the counting measure on $\mathcal V$.
Define the sections of $M$:
$$ M_\ell=\{V: (\ell,V)\in M\},\ \ \  M_V:=\{\ell: (\ell,V)\in M\}. $$
By \eqref{kaufpigeon1} and Fubini, we have
\begin{equation}\label{kaufpigeon3}
    (\nu_\bA\times \#)(M)\ge \frac{1}{10j^2} \mu(\mathcal V).
\end{equation}
This implies
\begin{equation}\label{kaufpigeon4}
    (\nu_\bA\times \#)\bigg(\Big\{(\ell,V)\in M: \mu(M_\ell)\ge\frac{1}{20j^2}\mu(\mathcal V)  \Big\}\bigg)\ge \frac{1}{20j^2}\mu(\mathcal V).
\end{equation} 
By \eqref{kaufpigeon4}, we have
\begin{equation}\label{kaufpigeon5}
    \nu_\bA\bigg(\Big\{\ell\in \bA: \mu(M_\ell)\ge \frac{1}{20j^2}\mu(\mathcal V) \Big\}\bigg)\ge \frac{1}{20j^2}. 
\end{equation} 

We are ready to apply Theorem \ref{diskaufman}. Recall $\de=2^{-j}$ and $\#\mathcal V\gtrsim (\log\de^{-1})^{-2}\de^{-t}$.
We may assume $t>k(n-k)-(k-1)$, otherwise we are done. Set 
\begin{equation}\label{defu}
    u=\min\{t-k(n-k)+(k-1),a\}-\e.
\end{equation}
By \eqref{kaufpigeon5} and Lemma \ref{frostmans}, we can find a $(\de,u)$-subset of $\{\ell\in \bA: \# M_\ell\ge \frac{1}{20j^2}\#\mathcal V \}$ with cardinality $\gtrsim (\log\de^{-1})^{-2}\de^{-u}$. We denote this set by $\bH$. For each $\ell\in H$, we see that there are $\gtrsim (\log\de^{-1})^{-2}\#\mathcal V$ many slabs from $\cup_{V\in\mathcal V}\T_{V,j}$ that contain $\ell_\de$. We can now apply Theorem \ref{diskaufman} to obtain
\[
\de^{-u-t}\lesssim_\e \de^{-\e-t-s} .
\]
By letting $\en\rightarrow 0$ (and hence $\de\rightarrow 0$), we obtain
\[ u+t\le t+s+\e. \]
Plugging in the definition of $u$ and letting $\e\rightarrow 0$, we obtain
\[ \min\{t-k(n-k)+(k-1),a\}\le s. \]
Since $a>s$, we obtain
\begin{equation}
    t\le k(n-k)+s-(k-1).
\end{equation}

\subsection{Discretized Kaufman-type estimate}
\begin{proof}[Proof of Theorem \ref{diskaufman}]

Define $\T=\bigcup_{V\in\cV} \T_V$, where $\T_V$ is given by Theorem \ref{diskaufman}. If two $\de$-slab are comparable, then we just identify them. So we assume the $\de$-slabs in $\T$ are essentially distinct.

We define the following incidence pair:
\begin{equation}
    \cI=\cI(\T,\cV):=\{(T,V)\in \T\times \cV: T\in \T_V  \}.
\end{equation}

We will prove the theorem by comparing the upper and lower bound of $\cI$. We easily see the upper bound
\begin{equation}\label{kauflowerbound}
    \#\cI \lesssim \sum_{V\in\cV}\#\T_V\lesssim  \#\cV \#\T_V\lesssim \de^{-t-s}.
\end{equation} 
Here, we used the bound $\#\cV\lesssim \de^{-t}$ since $\cV$ is a $(\de,t)$-set, and we used $\#\T_V\lesssim \de^{-s}$ by plugging in $r=1$ in \eqref{sfrostman}.

\medskip

For the lower bound of $\#\cI$, we will use the following inequality:
\[ \#(\bigcup_i A_i)\ge \sum_i \#(A_i\setminus \bigcup_{j\neq i} A_j). \]

By \cite[Lemma 13]{bright2022exceptional},
we choose a $\de|\log\de|^{O(1)}$-separated subset $\bH'\subset \bH$ such that $\bH'$ is a $(\de|\log\de|^{O(1)},u)$-set and \begin{equation}
    \#\bH'\gtrsim |\log\de|^{-O(1)}\de^{-u}.
\end{equation}
Here, $O(1)$ is a large constant to be determined later.

For each $\ell\in \bH'$, we define the following subset of $\cI$.
\begin{equation}
    \cI_\ell:=\{(T,V)\in \cI: \ell_\de\subset T\}.
\end{equation}
Then, $\cI\supset \bigcup_{\ell\in\bH'} \cI_\ell$.

We have
\begin{align}
    \label{furstenberg}\#\cI\ge \#\left(\bigcup_{\ell\in\bH'}\cI_\ell\right) &\geq \#\left(\bigcup_{\ell\in \bH'} \left(\cI_\ell \setminus \bigcup_{\ell_1\in \bH'\setminus \{\ell\}} \cI_{\ell_1}\right)\right) \\
    &= \sum_{\ell\in \bH'} \#\left(\cI_\ell \setminus \bigcup_{\ell_1\in \bH'\setminus \{\ell\}} \cI_{\ell_1}\right) \\
    &= \sum_{\ell\in \bH'} \bigg(\# \cI_\ell - \sum_{\ell_1\in \bH'\setminus \{\ell\}}\#\left(\cI_\ell \cap  \cI_{\ell_1}\right)\bigg).
\end{align}
We will show that
$$ \# \cI_\ell - \sum_{\ell_1\in \bH'\setminus \{\ell\}}\#\left(\cI_\ell \cap  \cI_{\ell_1}\right)\ge \frac{1}{2}\#\cI_\ell. $$
This will imply
\begin{equation}
    \#\cI\ge \frac{1}{2}\sum_{\ell\in\bH'}\#\cI_\ell\gtrsim (\log\de^{-1})^{-O(1)}  \de^{-u-t}.
\end{equation}

We make one observation for $\cI_\ell$. Note that for any $V\in\cV$, there are $\lesssim 1$ $T_i$'s from $\T$ such that $(T_i,V)\in \cI_\ell$. The reason is that if $T_i\in\cI_\ell$, then $T_i$ is orthogonal to $V$ at $P_V(\ell_\de)$. There can be at most $\lesssim 1$ such $T_i$'s. By losing a constant factor in the estimate, we may assume for any $V\in\cV$ there is one or no $T$ such that $(T,V)\in\cI_\ell$. Therefore, we can identify $\cI_\ell$ with the set on the left hand side of \eqref{tlower}. Also, \eqref{tlower} implies for any $\ell\in \bH'$,
\begin{equation}
    \#\cI_{\ell}\gtrsim (\log\de^{-1})^{-2}\#\cV\gtrsim (\log\de^{-1})^{-4} \de^{-t}.
\end{equation}

\medskip

For fixed $\ell$, and $\ell_1\in \bH'\setminus \{\ell\}$, we want to find an upper bound for $\#(\cI_\ell\cap\cI_{\ell_1})$. 

By the condition that $\bH'$ is contained in a $\mu$-ball in $\Aloc(1,n)$, we may assume all the $\ell$ in $\bH'$ form an angle $\le 10^{-1}$ with the $x_n$-axis. We will introduce a new metric for these lines. It will not be hard to see that this metric is comparable to the metric on $\Aloc(1,n)$ introduced in \eqref{defdist}.

Let $\Pi_0=\R^{n-1}\times \{0\}, \Pi_1=\R^{n-1}\times \{1\}$. Then each $\ell\in\bH'$ will intersects $\Pi_0$, $\Pi_1$ at two points denoted by $P_0(\ell), P_1(\ell)$. We will use $(P_0(\ell), P_1(\ell))$ for the local coordinates of $\ell$. And we define the metric to be 
\begin{equation}
    d(\ell,\ell_1):=|P_0(\ell)-P_0(\ell_1)|+|P_1(\ell)-P_1(\ell_1)|.
\end{equation}
This metric is equivalent to the metric defined in \eqref{defdist}. We leave out the proof.

Next, we will estimate $\#(\cI_\ell\cap\cI_{\ell_1})$. By the definition of $d(\ell,\ell_1)$, we may assume $|P_0(\ell)-P_0(\ell_1)|\ge \frac{1}{2}d(\ell,\ell_1)$. We denote the number
\[ d:= |P_0(\ell)-P_0(\ell_1)|.\]
We remind the reader $\de/2\le d\le 1$.

We introduce the two dimensional plank $\cP$ which has size $\sim 1\times d$. The side of $\cP$ of length $1$ is $\ell\cap B^n(0,1)$. The side of $\cP$ of length $d$ is $\overrightarrow{P_0(\ell)P_0(\ell_1)}$. 

If $(T,V)\in \cI_\ell\cap \cI_{\ell_1}$, then $\cP\subset T$ and $P_V(T)$ is contained in a $\de$-tube. Therefore, $P_V(\cP)$ is contained in a $\de$-tube.
We formally write
\[ \cI_\ell\cap \cI_{\ell_1}\supset \{ V\in\cV: P_V(\cP) \textup{~is~contained~in~a~} \de\textup{-tube} \}. \]

We make the following observation.
\begin{lemma}\label{numberofplane}
Let $\cM$ be a set of $k$-dimensional subspaces $V$ so that the projection of $\cP$ to $V$ is a line or point. In other words,
\begin{equation}\label{defM}
    \cM:=\{ V\in G(k,n): \dim(P_V(\cP))\le 1 \}. 
\end{equation} 
Then we have
\begin{equation}\label{eqcountinglem}
    \{ V\in G(k,n): P_V(\cP) \textup{~is contained in a~} \de\textup{-tube} \}
    \subset \cN_{C\de d^{-1}}(\cM).
\end{equation}
Here, $C$ is some big constant.
\end{lemma}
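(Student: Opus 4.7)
The plan is to identify $\cM$ with the set of $V$ whose orthogonal complement meets the $2$-plane containing $\cP$, and then, for any $V$ with $P_V(\cP)$ in a $\de$-tube, to explicitly produce a $V'\in\cM$ with $d(V,V')\lesssim \de/d$ via a single small rotation of $V$. Let $e$ be the direction of $\ell$ (a unit vector) and $w$ the unit vector along $\overrightarrow{P_0(\ell)P_0(\ell_1)}$, so $\cP$ lies in an affine $2$-plane with direction $H := \mathrm{span}(e,w)$. The condition $\dim P_V(\cP)\le 1$ is equivalent to $P_V|_H$ having rank $\le 1$, i.e.\ $H\cap V^\perp \ne \{0\}$, so
\[
\cM \;=\; \{V\in G(k,n) : H\cap V^\perp \ne \{0\}\}.
\]

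For such a $V$, the parallelogram $P_V(\cP)$ has a long side $P_V(e)$ of length $\gtrsim \mu$ (by the transversality in Theorem \ref{exthm}) and a short side $d P_V(w)$. The $\de$-tube hypothesis forces the component of $dP_V(w)$ orthogonal to $P_V(e)$ inside $V$ to have length $\lesssim \de$. Setting
\[
\mu_0 := \frac{\langle P_V(w), P_V(e)\rangle}{|P_V(e)|^2}, \qquad u := w - \mu_0 e \;\in\; H,
\]
this rewrites as $|P_V(u)| \lesssim_\mu \de/d$. I also need the lower bound $|u|\gtrsim 1$: the running assumption that every $\ell\in\bH'$ makes angle $\le 10^{-1}$ with the $x_n$-axis, together with $w\in\R^{n-1}\times\{0\}$, gives $|\langle w,e\rangle|\le 1/10$, and completing the square in $|u|^2 = 1 - 2\mu_0\langle w,e\rangle + \mu_0^2$ yields $|u|^2 \ge 97/100$.

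I then build $V'$ by a single $2$-plane rotation of $V$. Write $u/|u| = \alpha v_1 + \sqrt{1-\alpha^2}\,v_2$ with $v_1 := P_V(u)/|P_V(u)|\in V$, $v_2\in V^\perp$ a unit vector, and $\alpha := |P_V(u)|/|u|\lesssim_\mu \de/d$. Rotating $V$ inside $\mathrm{span}(v_1,v_2)$ by angle $-\arcsin\alpha$ sends $v_1$ to a vector orthogonal to $u/|u|$ and fixes every direction of $V$ perpendicular to $v_1$; the resulting $k$-plane $V'$ therefore satisfies $u\in V'^\perp$, hence $V'\in\cM$. Moreover $\|\pi_V - \pi_{V'}\|$ equals the sine of the rotation angle, which is $\alpha$, so $d(V,V')\lesssim_\mu \de/d$, as required. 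The main technical obstacle is the lower bound $|u|\gtrsim 1$: without it, the denominator in $\alpha = |P_V(u)|/|u|$ could blow up and the perturbation step would no longer land within $C\de/d$ of $V$. It is precisely here that the coordinate setup forcing $\angle(w,e)$ to be close to $\pi/2$ is essential.
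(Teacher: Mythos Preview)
Your proof is correct and follows essentially the same approach as the paper's sketch: identify a direction $u$ in the $2$-plane $H$ spanned by $\cP$ that is nearly in $V^\perp$ (the paper phrases this via rescaling $\cP$ to a unit square $\cQ$ whose projection lies in a $d^{-1}\de$-tube), then perform a small rotation of $V$ to put $u$ exactly in $V'^\perp$. You supply the explicit construction of the rotation and the lower bound $|u|\gtrsim 1$ that the paper leaves as an unproven claim; note only that, like the paper, you are implicitly restricting to $V\in G_\mu$ when invoking $|P_V(e)|\gtrsim\mu$, which is harmless since the lemma is only applied to $V\in\cV\subset G_\mu$.
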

\begin{proof}
    We sketch the idea of the proof. Let $V$ be in the LHS of \eqref{eqcountinglem}. Then $P_V(\cP)$ is contained in a $1\times \de$-tube. Define $\cQ$ to be a $1\times 1$-square, which is obtained by prolonging the length-$d$ side of $\cP$ $d^{-1}$ times. We see that $P_V(\cQ)$ is contained in a $1\times d^{-1}\de$-tube. The next step is a rotation argument. We claim that by rotating $V$ within angle $\lesssim d^{-1}\de$, we obtain another $k$-subspaces $W$ such that $P_W(\cQ)$ is a line. We leave out the proof of this claim.

    Therefore, we find a $W\in \cM$ such that $d(V,W)\lesssim d^{-1}\de$, which finishes the proof of lemma. 
\end{proof}

The next lemma is about the dimension of $\cM$.
\begin{lemma}\label{dimlem}
    Let $\cM$ be given by \eqref{defM}. Then
    \begin{equation}
        \dim(\cM)=1+k(n-k-1).
    \end{equation}
\end{lemma}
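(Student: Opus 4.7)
The plan is to parametrize $\cM$ via an incidence correspondence against a $1$-parameter family of lines in the $2$-plane through the plank $\cP$. Let $\Pi \subset \R^n$ be the $2$-dimensional linear subspace parallel to $\cP$ (through the origin). Since $P_V$ is linear, $\dim(P_V(\cP)) = \dim(P_V(\Pi))$, and by rank--nullity this equals $2 - \dim(\Pi \cap V^\perp)$. So
\[
\cM = \{V \in G(k,n) : \dim(\Pi \cap V^\perp) \geq 1\} = \{V \in G(k,n) : L \subset V^\perp \text{ for some line } L \subset \Pi\}.
\]

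First I would introduce the incidence variety
\[
Z := \{(L, V) \in G(1, \Pi) \times G(k,n) : L \subset V^\perp\}
\]
and compute $\dim Z$ by projecting to the first factor. For each fixed $L \in G(1,\Pi)$, the condition $L \subset V^\perp$ is equivalent to $V \subset L^\perp$, so the fiber is $G(k, L^\perp) \cong G(k, n-1)$, which has dimension $k(n-1-k) = k(n-k-1)$. Since $G(1,\Pi) \cong \RR\mathbb{P}^1$ has dimension $1$, we obtain $\dim Z = 1 + k(n-k-1)$. Next, the projection $\pi_2 \colon Z \to \cM$ is surjective by the above characterization of $\cM$, giving the upper bound $\dim \cM \leq \dim Z = 1 + k(n-k-1)$.

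For the matching lower bound I would show $\pi_2$ is generically one-to-one. The fiber $\pi_2^{-1}(V)$ is $\{L \in G(1,\Pi) : L \subset V^\perp\}$, which is a single point when $\dim(\Pi \cap V^\perp) = 1$ and equals all of $G(1,\Pi)$ precisely when $\Pi \subset V^\perp$, i.e.\ $V \subset \Pi^\perp$. The exceptional locus $\cM' = \{V : V \subset \Pi^\perp\} \cong G(k, n-2)$ has dimension $k(n-k-2) < 1 + k(n-k-1)$ when $n-k \geq 2$, and is empty when $n-k = 1$ (in which case the formula gives $1$, which is correct since $\cM \cong G(1,\Pi)$). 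Hence on the open set $Z \setminus \pi_2^{-1}(\cM')$ the map $\pi_2$ is a smooth injective immersion onto $\cM \setminus \cM'$, forcing $\dim \cM = 1 + k(n-k-1)$.

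The only delicate point, and the step I expect to require a short verification, is the generic injectivity together with the claim that the projection is a diffeomorphism onto its image on the generic locus; this is a routine but essential check so that the dimension count transfers from the smooth manifold $Z$ to the Hausdorff dimension of $\cM$ and is not inflated by the lower-dimensional exceptional stratum.
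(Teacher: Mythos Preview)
Your proof is correct and follows essentially the same approach as the paper: both reduce to the condition that $V^\perp$ meets the $2$-plane $\Pi$ in at least a line, then count by first choosing that line ($1$-dimensional) and then choosing $V$ (equivalently $V^\perp$) subject to that constraint ($k(n-k-1)$-dimensional). Your incidence-variety formulation and explicit handling of the exceptional stratum $\cM'\cong G(k,n-2)$ make rigorous precisely the step the paper sketches with ``we just need to consider the first case.''
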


\begin{proof}
    We sketch the proof. We may assume $\cP$ span the plane $\R^2$.  If $\dim(P_V(\R^2))\le 1$, then $V^\perp\cap \R^2$ contains a line or the whole $\R^2$. We just need to consider the first case. We think of $V^\perp$ as spanned by $n-k$ orthonormal vectors $v_1,\dots,v_{n-1}$. From the condition that $V^\perp \cap \R^2$ contains a line, we have one dimensional choice of vector $V_1$ that lies in $\R^2$, and then we choose $v_2,\dots,v_{n-k}$ from $v_1^\perp$ which has $\dim(G(n-k-1,n-1))=k(n-k-1)$ dimensional choice. Therefore, we see that such $V^\perp$ and hence $V$ has $1+k(n-k-1)$ dimensional choice.
\end{proof}

We have the estimate
\begin{equation}
    \#(\cI_\ell\cap\cI_{\ell_1})\le \#(\cV\cap \cN_{C\de d^{-1}}(\cM)).
\end{equation}
And we remind the reader that $\cV$ is a $(\de,t)$-set.

By Lemma \ref{dimlem},
we can cover $\cN_{C\de d^{-1}}(\cM)$ by $\sim (\de^{-1}d)^{1+k(n-k-1)}$ many $\de d^{-1}$-balls in $G(k,n)$.
By the $(\de,t)$ property of $\cV$, we have
$$ \#(\cV \cap \cN_{C\de d^{-1}}(\cM) )\lesssim (\de^{-1}d)^{1+k(n-k-1)} d^{-t}.  $$
So, we have
\begin{align*}
    \sum_{\ell_1\in \bH'\setminus\{\ell\}}\#(\cI_\ell\cap\cI_{\ell_1})&\lesssim \sum_{\ell_1\in \bH'\setminus\{\ell\}} \left(\de^{-1}d(\ell,\ell_1)\right)^{1+k(n-k-1)} d(\ell,\ell_1)^{-t}\\
    &=\sum_{\de|\log\de|^{O(1)}\le d\le 1}\ \ \sum_{\ell_1\in \bH', d(\ell,\ell_1)\sim d}\left(\de^{-1} d\right)^{1+k(n-k-1)} d^{-t}.
\end{align*} 
Here the summation over $d$ is over dyadic numbers.
Since $\#(\bH'\cap Q_d(\ell))\lesssim (\frac{d}{\de})^{u}$, the expression above is bounded by
\begin{align*}
    &\lesssim\sum_{\de|\log\de|^{O(1)}\le d\le 1}\left(\frac{d}{\de}\right)^{u}\left(\frac{d}{\de}\right)^{1+k(n-k-1)} d^{-t}\\
    &=\de^{-t}\sum_{\de|\log\de|^{O(1)}\le d\le 1}\left(\frac{d}{\de}\right)^{u+1+k(n-k-1)-t}\\
    &\lesssim \de^{-t}|\log\de|^{O(1)(u+1+k(n-k-1)-t)}.
\end{align*}
From the condition in Theorem \ref{diskaufman}, $u+1+k(n-k-1)-t\le -\e<0$. by choosing the constant $O(1)$ big enough, we have
$$ \sum_{\ell_1\in \bH'\setminus\{\ell\}}\#(\cI_\ell\cap\cI_{\ell_1})\le C^{-1}|\log\de|^{-4}\de^{-t}\le \frac{1}{2}\#\cI_\ell. $$
As a result, we have
$$ \#\cI\ge |\log\de|^{-O(1)}\de^{-u-t}. $$
Compared with the upper bound of $\#\cI$ in \eqref{kauflowerbound}, we finish the proof.
\end{proof}

\section{Falconer-type exceptional set estimate}\label{sec5}

We will use the high-low method to prove Falconer-type exceptional set estimate. The key ingredient in the high-low method is the Fourier analysis. Since we are working with the set $\bA$ as a subset of $A(1,n)$, we need the Fourier transform in $A(1,n)$. It is quite hard to define a global Fourier transform on $A(1,n)$, but since our set $\bA$ has been localized in $A_\mu$ (see Theorem \ref{exthm}), we just need to consider the Fourier transform in the set $\wt A(1,n)$ consisting of lines in $A(1,n)$ that is transverse to $\R^{n-1}$. In other words,
\begin{equation}
    \wt A(1,n):=\{\ell\in A(1,n): \ell \textup{~is~not~parallel~to~} \R^{n-1} \}.
\end{equation}

In the next subsection, we introduce the Fourier transform in $\wt A(1,n)$ and discuss some properties. 

\subsection{Fourier transform on \texorpdfstring{$A(1,n)$}{}}\hfill

First, we introduce the coordinate for $\wt A(1,n)$. By definition, every $\ell\in\wt A(1,n)$ intersect $\R^{n-1}\times \{0\}$ and $\R^{n-1}\times \{1\}$ at two points for which we denote by $X(\ell),Y(\ell)$. We see that we parametrize $\wt A(1,n)$ using the coordinates $(X,Y)$.
\begin{equation}\label{definverse}
    (X,Y): \wt A(1,n)\xrightarrow{\cong} \R^{n-1}\times \R^{n-1}\ \ \ell\mapsto (X(\ell),Y(\ell)).
\end{equation}
Here, we can just view $\R^{n-1}\times \R^{n-1}$ as $\R^{2(n-1)}$.
We see that we can pull back the Fourier transform in $\R^{2(n-1)}$ to that in $\wt A(1,n)$. For convenience, we denote the inverse of \eqref{definverse} to be
\begin{equation}\label{lfunction}
    \ell: \R^{n-1}\times \R^{n-1}\xrightarrow{\cong} \wt A(1,n)\ \ (X,Y)\mapsto \ell(X,Y).
\end{equation}

Here is the precise definition for Fourier transform on $\wt A(1,n)$. Suppose $F=F(\ell)$ is a function on $\wt A(1,n)$, we define the Fourier transform of $F$, denoted by $\wh F(\ell)$, to be the function on $\wt A(1,n)$ given by
\begin{equation}
    \wh F(\ell):=\big(\cF(F\circ\ell)\big)(X(\ell),Y(\ell)).
\end{equation}
We explain the expression. Given $F$ as a function on $\wt A(1,n)$, we use $\ell$ in \eqref{lfunction} to pull back $F$ to a function $F\circ \ell$ which is defined on $\R^{2(n-1)}$. We apply $\cF$, which is the standard Fourier transform on $\R^{2(n-1)}$, to $F\circ \ell$, and obtain the function $\cF(F\circ \ell)$ on $\R^{2(n-1)}$. At last, we use $(X,Y)$ in \eqref{definverse} to pull back $\cF(F\circ \ell)$ to the function defined on $\wt A(1,n)$. 

\bigskip

Next, we will introduce the dual rectangle. This tool is always used as a black box. It says that if a function is a smooth bump function adapted to a rectangle, then its Fourier transform is morally a smooth function adapted to the dual rectangle multiplied by some normalizing constant. We first review this property for Fourier transform in $\R^n$ and then talk about it in $\wt A(1,n)$.

\begin{definition}
    Let $R\subset \R^n$ be a rectangle of dimensions $a_1\times a_1\times \dots\times a_n$. We define the dual rectangle of $R$ to be another rectangle $R^*$ centered at the origin with dimensions $a_1^{-1}\times a_2^{-1}\times \dots\times a_n^{-1}$, so that the  edge of $R^*$ of length $a_i^{-1}$ is parallel to the  edge of $R$ of length $a_i$.
\end{definition}

We state with out proof the following result.
\begin{proposition}
    Let $R$ be a rectangle in $\R^n$. Then there exists a smooth bump function $\psi_R$ such that $\psi_R(x)\ge 1$ for $x\in R$ and $\psi_R$ decays rapidly outside $R$. And $\psi_R$ satisfies $\supp(\cF(\psi_R))\subset R^*$ and
    \begin{equation}\label{dualineq}
        |\cF (\psi_R)| \lesssim |R|\cdot 1_{R^*},
    \end{equation}
Intuitively, we may think of $\psi_R$ as the indicator function $1_R$.
\end{proposition}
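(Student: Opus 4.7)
The plan is to reduce to a canonical rectangle by an affine change of variables, construct $\psi$ on the unit cube by an autocorrelation of a compactly supported bump, and then transfer everything back to $R$.

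\textbf{Step 1 (affine reduction).} Let $Q = [-1/2,1/2]^n$ and let $T$ be the diagonal scaling $Tx = (x_1/a_1,\ldots,x_n/a_n)$, so that $T$ carries a centered copy of $R$ bijectively onto $Q$ and, dually, $T$ carries $Q$ bijectively onto $R^*$ (consistent with $|R|\,|R^*|=1$). By the Fourier scaling identity $\cF(f\circ T)(\xi)=|\det T|^{-1}\cF(f)(T^{-t}\xi)=|R|\,\cF(f)(T^{-1}\xi)$, if $\psi_Q$ has $\supp\cF(\psi_Q)\subset Q$, then $\psi_R(x):=\psi_Q(T(x-c_R))$, with $c_R$ the center of $R$, satisfies $\supp\cF(\psi_R)\subset R^*$ and $|\cF(\psi_R)|\le |R|\,\|\cF(\psi_Q)\|_\infty\,1_{R^*}$; translation only contributes a unimodular phase. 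The pointwise inequality $\psi_Q\ge 1$ on $Q$ transfers to $\psi_R\ge 1$ on $R$ verbatim, and any quantitative decay of $\psi_Q$ away from $Q$ becomes anisotropic decay of $\psi_R$ away from $R$.

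\textbf{Step 2 (construction on $Q$).} Pick a nonnegative real $\phi \in C^\infty_c(\R^n)$ supported in $cQ$ with $\int\phi=1$, where $c=c(n)$ is a small constant. Set $\psi_Q(x) := 2|\check\phi(x)|^2$. Then $\cF(\psi_Q)=2\,\phi*\tilde\phi$ with $\tilde\phi(\xi)=\phi(-\xi)$, which is supported in $cQ+cQ\subset Q$ as soon as $c\le 1/2$, and whose sup norm is bounded by $2\|\phi\|_\infty\|\phi\|_1 \lesssim_n 1$. For the pointwise lower bound, $\operatorname{Re}\check\phi(x)=\int\phi(\xi)\cos(2\pi x\cdot\xi)\,d\xi$; for $x\in Q$ and $\xi\in cQ$ one has $|x\cdot\xi|\le cn/4$, so choosing $c$ small enough that $\cos(\pi cn/2)\ge 1/\sqrt{2}$ yields $\operatorname{Re}\check\phi(x)\ge 1/\sqrt 2$ and hence $|\check\phi(x)|^2\ge 1/2$, giving $\psi_Q\ge 1$ on $Q$. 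Finally $\check\phi$ is Schwartz, so $\psi_Q$ decays faster than any polynomial.

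\textbf{Step 3 (the main subtlety).} The conjunction ``$\supp\cF(\psi_R)\subset R^*$'' together with ``$\psi_R$ decays rapidly outside $R$'' should be read as: $\psi_R$ is a Schwartz (indeed band-limited) function that is $\ge 1$ on $R$ and decays faster than any polynomial outside $R$ in the anisotropic sense. A genuinely compactly supported Fourier transform would force $\psi_R$ to be real-analytic, precluding exact spatial support; the autocorrelation construction is the standard band-limited proxy for a bump on $R$. The only technical point worth highlighting is that the lower bound $|\check\phi|^2\ge 1/2$ must hold \emph{uniformly} on all of $Q$, not only near the origin — this is precisely why we localize $\phi$ inside the dilate $cQ$ with $c=c(n)$ small, so that the phase $e^{2\pi i x\cdot\xi}$ stays in the open right half-plane for every $(x,\xi)\in Q\times cQ$. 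Once this uniform lower bound is secured on the unit cube, Step 1 transfers the statement to $R$ with only an $n$-dependent implicit constant.
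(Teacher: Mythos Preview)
Your construction is correct: the autocorrelation $\psi_Q=2|\check\phi|^2$ with $\phi\in C_c^\infty(cQ)$, $\phi\ge 0$, $\int\phi=1$, and $c\le 1/(2n)$ gives a Schwartz function with $\psi_Q\ge 1$ on $Q$ and $\supp\cF(\psi_Q)\subset 2cQ\subset Q$, and the affine pullback in Step~1 transfers this to an arbitrary rectangle with the right Fourier support and the right $|R|$ factor. The paper itself states this proposition without proof, treating it as a standard fact from harmonic analysis, so there is no argument to compare against; your write-up supplies exactly the routine construction the paper takes for granted.
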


\medskip

\begin{figure}[ht]
\centering
\begin{minipage}[b]{0.85\linewidth}
\includegraphics[width=11cm]{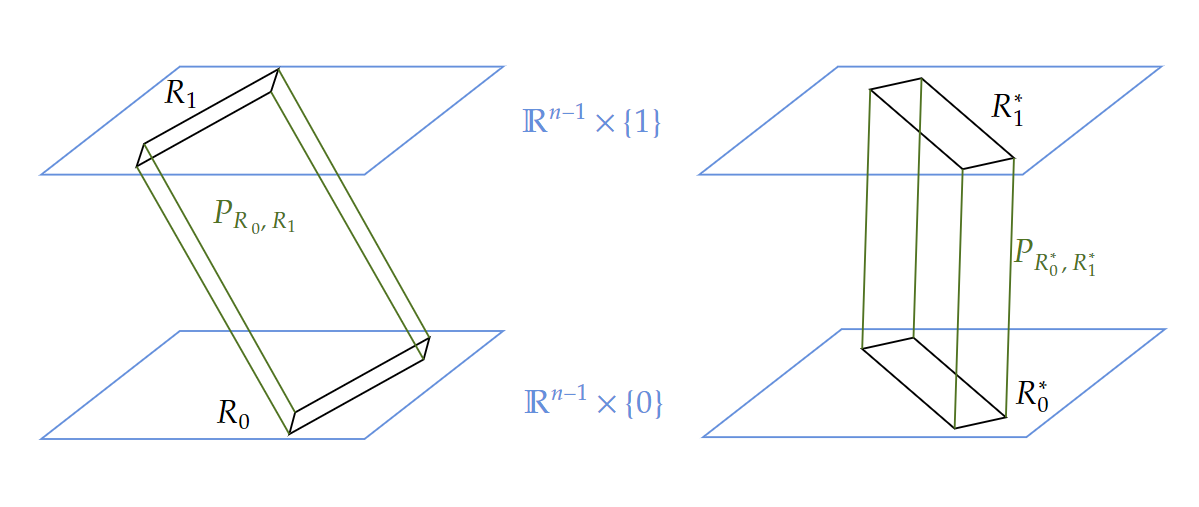}
\caption{}
\label{grassrectangle}
\end{minipage}
\end{figure}

Now, we are going to introduce the notion of rectangle and dual rectangle in $\wt A(1,n)$, and obtain the inequality of form \eqref{dualineq}. We will view $\wt A(1,n)$ as $\R^{n-1}\times \R^{n-1}$.

\begin{definition}\label{defrect}
    We say $\bR$ is a rectangle in $ \R^{n-1}\times \R^{n-1}$ if $\bR$ has the following form
    \begin{equation}
        \bR=R_0\times R_1.
    \end{equation}
 Here, $R_0\subset \R^{n-1}$ is a rectangle and $R_1\subset \R^{n-1}$ is a translated copy of $R_0$. We define the dual rectangle to be \begin{equation}
     \bR^*=R_0^*\times R_1^*.
 \end{equation}
\end{definition}

\begin{remark}
    {\rm See Figure \ref{grassrectangle}.
    Since $R_1$ is a translated copy of $R_0$, we have $R_0^*=R_1^*$. There are two ways to understand $\bR$. On the one hand, $\bR$ is a Cartesian product of $R_0, R_1$ as a subset of $\R^{n-1}\times \R^{n-1}$. On the other hand, $\bR$ is a subset of $\wt A(1,n)$. If we use $P_{R_0,R_1}$ to denote the rectangle in $\R^n$ which is the convex hull of $R_0$ and $R_1$, then $\bR= \bigg\{\ell \in \wt A(1,n): \ell\cap \{0\le x_n\le 1\} \subset P_{R_0,R_1}\bigg\} $.
    }
\end{remark}

We have the following result.
\begin{proposition}
    Let $\bR$ be a rectangle in $\R^{n-1}\times \R^{n-1}$. Then there exists a smooth bump function $\psi_\bR$ such that $\psi_\bR(X,Y)\ge 1$ for $(X,Y)\in\bR$ and $\psi_\bR$ decays rapidly outside $\bR$. And $\psi_\bR$ satisfies $\supp(\wh \psi_\bR)\subset \bR^*$ and
    \begin{equation}
        |\wh \psi_\bR| \lesssim |\bR|\cdot 1_{\bR^*},
    \end{equation}
Intuitively, we may think of $\psi_\bR$ as the indicator function $1_\bR$.
\end{proposition}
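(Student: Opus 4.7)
The plan is to reduce the proposition to the standard statement about rectangles in $\R^{2(n-1)}$, which was already recalled above (with the bump function $\psi_R$ satisfying $|\cF(\psi_R)| \lesssim |R| \cdot 1_{R^*}$). Under the coordinates $(X,Y): \wt A(1,n) \to \R^{n-1}\times\R^{n-1}$ defined in \eqref{definverse}, the Fourier transform on $\wt A(1,n)$ is by definition just the Euclidean Fourier transform on $\R^{2(n-1)}$, so it suffices to produce a bump $\psi_\bR$ on $\R^{2(n-1)}$ with the asserted properties for the special product rectangle $\bR = R_0 \times R_1$.

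The key structural fact used is built into Definition \ref{defrect}: $R_1$ is a translated copy of $R_0$, so we can write $R_0 = c_0 + R'$ and $R_1 = c_1 + R'$, where $R' \subset \R^{n-1}$ is a rectangle centered at the origin with the common shape. Consequently $R_0^* = R_1^* = (R')^*$, and therefore $\bR^* = (R')^* \times (R')^*$. Applying the already-stated proposition for Euclidean rectangles to $R'$, we obtain a bump $\psi_{R'}$ on $\R^{n-1}$ which is $\ge 1$ on $R'$, decays rapidly off $R'$, and satisfies $\supp(\cF \psi_{R'}) \subset (R')^*$ together with $|\cF \psi_{R'}| \lesssim |R'| \cdot 1_{(R')^*}$.

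Next I would define
\begin{equation}
\psi_\bR(X,Y) := \psi_{R'}(X - c_0)\,\psi_{R'}(Y - c_1).
\end{equation}
This is a smooth bump with $\psi_\bR \ge 1$ on $R_0 \times R_1 = \bR$ and rapid decay off $\bR$ (both properties are inherited from the single-variable bump in a product way). Since translation becomes modulation under $\cF$, one computes
\begin{equation}
\wh{\psi_\bR}(\xi,\eta) = e^{-2\pi i (c_0 \cdot \xi + c_1 \cdot \eta)}\,\cF(\psi_{R'})(\xi)\,\cF(\psi_{R'})(\eta),
\end{equation}
so the support of $\wh{\psi_\bR}$ sits inside $(R')^* \times (R')^* = \bR^*$, and since the modulation factor has modulus one,
\begin{equation}
|\wh{\psi_\bR}(\xi,\eta)| \lesssim |R'|^{2}\,1_{(R')^*}(\xi)\,1_{(R')^*}(\eta) = |R_0|\,|R_1|\cdot 1_{\bR^*}(\xi,\eta) = |\bR|\cdot 1_{\bR^*}(\xi,\eta),
\end{equation}
where we used that $|R_0| = |R_1| = |R'|$ and that $|\bR| = |R_0|\,|R_1|$ under product measure on $\R^{2(n-1)}$.

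There is no real obstacle here; the only point requiring care is a bookkeeping one, namely that the translation-equivalence condition in Definition \ref{defrect} is precisely what forces $R_0^* = R_1^*$, so that $\bR^* = R_0^* \times R_1^*$ is a genuine rectangle of the form prescribed by the definition, and the naive product construction does not lose the sharp constant $|\bR|$.
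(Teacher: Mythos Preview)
Your argument is correct and matches the paper's intended construction: the paper does not write out a proof of this proposition, but later in equation \eqref{bumpfunc} it explicitly takes $\psi_\bT(X,Y)=\psi_{R_{0,\bT}}(X)\psi_{R_{1,\bT}}(Y)$, which is precisely your tensor-product bump built from the single-factor proposition. Your observation that the translation-equivalence in Definition \ref{defrect} forces $R_0^*=R_1^*$ is exactly the point that makes $\bR^*$ well-defined and the product construction go through.
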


\subsection{Discretized estimate}
We state a discretized version of \eqref{falconer}. The setup is basically the same as Theorem \ref{diskaufman}. Instead of \eqref{kaufest}, we have \eqref{falest}.

\begin{theorem}\label{disfalconer}
Fix a number $0<\mu<1/100$, and let $A_\mu, G_\mu$ be as in Theorem \ref{exthm}.
    Fix $t>0$, $0<s<a$. For sufficiently small $\e>0$ (depending on $\mu,n,k,a,s,t$), the following holds. Let $0<\de<1/100$. Let $\bH\subset A_\mu$ be a $(\de,a)$-set with $\#\bH\gtrsim (\log\de^{-1})^{-2}\de^{-a}$. Let $\cV\subset G_\mu$ be a $(\de,t)$-set with $\#\cV\gtrsim (\log\de^{-1})^{-2}\de^{-t}$. Suppose for each $V\in \cV$, we have a collection of slabs $\T_V$, where each $T\in \T_V$ has dimensions $\underbrace{\de\times\dots\times \de}_{(k-1)\ \textup{times}}\times \underbrace{1\times \dots\times 1}_{(n-k+1)\ \textup{times}}$ and is orthogonal to $V$ at some $\de$-tube in $V$.

    The $s$-dimensional Forstman condition holds for $\T_V$: for any $\de\le r\le 1$ and any $(n-k+1)$-dimensional $r$-slab $W_r$ that is orthogonal to $V$ at some $r$-tube in $V$, we have
    \begin{equation}\label{sfrostman2}
        \#\{T\in \T_V: T\subset W_r  \}\lesssim (r/\de)^s.
    \end{equation}
    
    We also assume that for each $\ell \in\bH$,
    \begin{equation}\label{low2}
        \#\{ V\in \cV: \ell_\de\subset T, \textup{~for~some~}T\in\T_V  \}\gtrsim (\log\de^{-1})^{-2}\#\cV.
    \end{equation}
    Then, we have
    \begin{equation}\label{falest}
        \de^{-t-a}\lesssim_\e \de^{-\e-(k+1)(n-k)-s}.
    \end{equation}
    
\end{theorem}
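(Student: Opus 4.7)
The plan is to run the high-low Fourier method, but on the affine Grassmannian $\wt A(1,n)$ rather than on $\R^n$, using the Fourier transform introduced in Section~\ref{sec5}. I would set up on $\wt A(1,n) \cong \R^{2(n-1)}$ the sum $g = \sum_{\ell \in \bH} \psi_{Q_\de(\ell)}$ of bumps on $\de$-balls (one per line of $\bH$), so that $\|g\|_1 \sim \de^{2(n-1)-a}$ and $\|g\|_2^2 \sim \de^{2(n-1)-a}$ by $\de$-separation. For each pair $(V,T)$ with $V \in \cV$, $T \in \T_V$, the set of lines contained in $T$ forms a rectangle $\bT \subset \wt A(1,n)$ in the sense of Definition~\ref{defrect}, with sides $\de^{k-1} \times 1^{n-k}$ in each $\R^{n-1}$ factor; its dual $\bT^*$ has $2(k-1)$ long directions of length $\de^{-1}$ and $2(n-k)$ short directions of length $1$. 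Hypothesis~\eqref{low2} and Fubini give the lower bound $\sum_{V, T} \int g\, \psi_{\bT} \gtrsim (\log \de^{-1})^{-4}\, \de^{-a-t+2(n-1)}$, and the goal is to match this from above by a quantity forcing $t + a \le (k+1)(n-k) + s + O(\e)$.

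I then choose a threshold $K = \de^{-\alpha}$ and split $g = g_L + g_H$ where $\wh g_L$ is supported in $|\xi| \le K$. The low part is bounded pointwise using the $(\de,a)$-Frostman property on $\bH$: $\|g_L\|_\infty \lesssim K^{2(n-1)-a}\, \de^{2(n-1)-a}$, and combined with $\sum_{V,T} |\bT| \le \de^{-t-s+2(k-1)}$ this gives
\[
\sum_{V,T} \int g_L\, \psi_{\bT} \lesssim K^{2(n-1)-a}\, \de^{2(n-1)+2(k-1)-a-t-s}.
\]
This forces $K$ to be at most a fixed power of $\de^{-1}$ for the low contribution to stay below the target lower bound.

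For the high part, let $F = \sum_{V,T} \psi_{\bT}$ and use Plancherel together with the support property $\supp \wh \psi_{\bT} \subset \bT^*$ and the bound $|\wh \psi_{\bT}| \lesssim |\bT|\, 1_{\bT^*}$:
\[
\sum_{V,T} \int g_H\, \psi_{\bT} = \int \wh g_H \cdot \overline{\wh F} \le \|g\|_2 \cdot \bigl\| \wh F \cdot 1_{|\xi|>K}\bigr\|_2.
\]
The factor $\|\wh F \cdot 1_{|\xi|>K}\|_2^2$ expands as
\[
\sum_{(V,T),(V',T')} |\bT|\,|\bT'| \int_{|\xi|>K} 1_{\bT^* \cap (\bT')^*}(\xi)\, d\xi,
\]
whose diagonal is controlled directly from $|\bT^* \cap \{|\xi|>K\}|$. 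For the off-diagonal, I would use that the long $2(k-1)$-dimensional subspace of $\bT^*$ is determined by $(V,T)$ (specifically by the direction in $V$ of the tube underlying $T$), and bound the overlap $|\bT^* \cap (\bT')^*|$ by the angle between the two long-direction subspaces; the $(\de,t)$-Frostman structure on $\cV$ and the $s$-Frostman on $\T_V$ then limit the number of $(V',T')$ with direction data close to $(V,T)$. Optimizing over $\alpha$ at the end yields $\de^{-a-t} \lesssim_\e \de^{-\e - (k+1)(n-k) - s}$.

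The main obstacle is this off-diagonal $L^2$ estimate. The sharp Falconer exponent $(k+1)(n-k) = k(n-k) + (n-k)$ is exactly what one expects from the anisotropy of $\bT^*$: since $\bT^*$ has $2(n-k)$ short (length $1$) directions that stay unit-scale in Fourier space, the Cauchy-Schwarz loss is concentrated in the $2(k-1)$-dim long subspace, producing the extra $(n-k)$ over the Kaufman-type exponent $k(n-k)-(k-1)$. Verifying that the Frostman counts on $\cV$ and on $\T_V$ combine to yield exactly this exponent, via a careful dimensional reduction to the angle between the tube-direction data of two pairs $(V,T)$ and $(V',T')$, is the technically delicate step.
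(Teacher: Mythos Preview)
Your framework is essentially the paper's: high--low Fourier analysis on $\wt A(1,n)\cong\R^{2(n-1)}$, with the slab bumps $\psi_{\bT}$ and their dual rectangles $\bT^*$. Your bilinear packaging $\int g\,F$ is equivalent to the paper's $\int_{\bH_\de}F^2$ (since $g$ is essentially an indicator, $\|g\|_2^2\sim\|g\|_1$, so Cauchy--Schwarz is lossless here), and both reduce to controlling $\|F_{\text{high}}\|_2^2$. Your low-part bound via the $(\de,a)$-Frostman on $\bH$ is correct, and in fact works with $K$ polylogarithmic (since $s<2(k-1)$), so no power-of-$\de^{-1}$ threshold or optimization over $\alpha$ is needed; the paper instead bounds $F_{\text{low}}$ pointwise on $\bH_\de$ using the $s$-Frostman on $\T_V$, which gives the same conclusion.

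The real gap is the high-part $L^2$ estimate, which you flag but do not carry out. Your sketch---pairwise bounds on $|\bT^*\cap(\bT')^*|$ organized by the angle between long subspaces, fed into the $(\de,t)$-Frostman on $\cV$---does not match the mechanism that actually produces the exponent $(k+1)(n-k)$. Two points of divergence: first, the long subspace of $\bT^*$ is $W^\perp\times W^\perp$ where $W\in G(n-k,n-1)$ is the direction of $T\cap(\R^{n-1}\times\{0\})$; this $W$ depends on both $V$ and the tube direction in $V$, not on the tube direction alone, so your parenthetical identification is imprecise. Second, and more importantly, the paper's upper bound never uses the $(\de,t)$-Frostman on $\cV$. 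Instead it regroups $\T_V=\bigsqcup_{W\in\G}\T_V^W$ by this direction $W$, applies Cauchy--Schwarz over $W$ with the overlap of $\{W_\de^*\setminus B(0,(K\de)^{-1})\}_W$ bounded by $K^{O(1)}\de^{-\dim G(k-2,n-2)}$ (a Grassmannian slab-overlap lemma), and then within each fixed $W$ observes that a given slab $T$ can be orthogonal to at most $\de^{-(n-k)}$ many $V\in\cV$. This two-layer almost-orthogonality is what yields $(k-2)(n-k)+(n-k)+2(n-k)=(k+1)(n-k)$; without the $W$-regrouping and the two distinct multiplicity bounds, it is not clear your pairwise-angle scheme recovers the sharp exponent.
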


The proof of \eqref{falconer} is deduced from Theorem \ref{disfalconer} by a similar argument. We just do not repeat here. We will focus on the proof of Theorem \ref{disfalconer}.

\begin{proof}[Proof of Theorem \ref{disfalconer}]
    The idea is to use the Fourier transform on affine Grassmannian together with the high-low method.

We set up some notation. Let $\bH_\de$ be the set of $\de$-balls in $\Aloc(1,n)$ whose centers are points in $\bH$. We use $\ell$ to denote the elements in $\bH$. By the heuristic \eqref{heuristic}, we just denote the $\de$-ball in $\bH_\de$ by the corresponding bold-font 
\[\bell_\de:=\{\ell' \in\Aloc(1,n): d(\ell,\ell')\le \de\}.\] 
By \eqref{sfrostman2}, for each $V\in \cV$, we have $s$-dimensional set of slabs $\T_V=\{T\}$. For a slab $T$, we use the bold-font $\bT$ to denote a subset of $\wt A(1,n)$, so that each $\ell\in\bT$ satisfies $\ell\cap B^n(0,1)\subset T$. Again, here we use the heuristic \eqref{heuristic}.   
    
Consider the the following integral
\begin{equation}
    \int_{\bH_\de}(\sum_{V\in\cV}\sum_{T\in \T_V}1_\bT)^2.
\end{equation}

First of all, by \eqref{low2} we notice that each $\ell_\de \in\bH_\de $ is contained in $\gtrsim (\log \de^{-1})^{-2} \de^{-t}$ different $\bT$. The volume of a $\de$-ball in $A(1,n)$ is $\sim \de^{2(n-1)}$, therefore the volume of $\bH_\de$ is $\sim \de^{2(n-1)}\#\bH$
We have the lower bound
\begin{equation}
    \int_{\bH_\de}(\sum_{V\in\cV}\sum_{T\in \T_V}1_\bT)^2\gtrsim (\log\de^{-1})^{-2t}\de^{2(n-1)}\#\bH\de^{-2t}\gtrsim_\e \de^{\e}\de^{2(n-1)}\de^{-a}\de^{-2t}.
\end{equation}

\medskip

Our next goal is to obtain an upper bound for the integral. Recalling Definition \ref{defrect}, since each $\bT$ is a rectangle in $\Aloc(1,n)$, we write $\bT=R_{0,\bT}\times R_{1,\bT}$. Here, $R_{0,\bT}$ is a rectangle in $\R^{n-1}\times \{0\}$ of dimensions $\underbrace{\de\times\dots \times \de}_{k-1\  \text{times}}\times \underbrace{1\times\dots \times 1}_{n-k\  \text{times}}$, $R_{1,\bT}$ is a translated copy of $R_{0,\bT}$ which lies in $\R^{n-1}\times \{1\}$. We can choose a smooth bump function 
\begin{equation}\label{bumpfunc}
    \psi_\bT(X,Y)=\psi_{R_{0,\bT}}(X)\psi_{R_{1,\bT}}(Y)
\end{equation}
adapted to $\bT$ so that $\supp \wh \psi_\bT\subset \bT^*=R_{0,\bT}^*\times R_{0,\bT}^*$. 

Define
\[ f_V=\sum_{T\in \T_V}\psi_\bT \text{~and~} f=\sum_{V\in \cV}f_V. \]
We will do the high-low decomposition. Let $K=(\log\de^{-1})^{O(1)}$, where $O(1)$ is a large number to be determined later. 

Since we can identify $\wt A(1,n)$ and $\R^{n-1}\times \R^{n-1}$ through the coordinates in \eqref{definverse} and \eqref{lfunction}, we will constantly jump between these two spaces in the latter discussion.

Choose a function $\eta(X,Y)$ on $\wt A(1,n)=\R^{n-1}\times \R^{-1}$, such that $\eta(X,Y)$ is a smooth bump function adapted to $B^{2(n-1)}(0,(K\de)^{-1})$. We have the following high-low decomposition for $f_V$:
\[ f_V=f_{V,\text{high}}+f_{V,\text{low}}, \]
where $\wh f_{V,\text{low}}=\eta \wh f_V$ and $\wh f_{V,\text{high}}=(1-\eta)\wh f_V$.
For each $(X,Y)\in \bH_\de$, we have
\begin{equation}
    (\log \de^{-1})^{-2}\#\cV \lesssim f(X,Y)\le |f_{\text{high}}(X,Y)|+|f_{\text{low}}(X,Y)|.
\end{equation}
We also let
\begin{equation}
    f_{\textup{high}}=\sum_{V\in\cV} f_{V,\textup{high}},\ \ \ f_{\textup{low}}=\sum_{V\in\cV} f_{V,\textup{low}}.
\end{equation}

We will show that the high part dominates for $(X,Y)\in \bH_\de$, i.e., $|f_{\text{high}}(X,Y)|\gtrsim (\log\de^{-1})^{-2}\#\cV$. It suffices to show
\begin{equation}\label{lowbound}
    |f_{\text{low}}(X,Y)|\le C^{-1} (\log\de^{-1})^{-2}\#\cV,
\end{equation}
for a large constant $C$.

Recall that $f_{\text{low}}=\sum_{V\in\cV}f_V*\eta^\vee$. By the definition of $\eta$, we see that $\eta^\vee$ is an $L^1$-normalized bump function essentially supported in $B^{2(n-1)}(0,K\de)$ and decays rapidly out side of it. Let $\chi(X)$ be a positive function $=1$ on $B^{n-1}(0,K\de)$ and decays rapidly outside $B^{n-1}(0,K\de)$. We have
\begin{equation}
    |\eta^\vee(X,Y)|\lesssim (K\de)^{-2(n-1)}\chi(X)\chi(Y).
\end{equation}
Together with \eqref{bumpfunc}, we have 
\begin{equation}\label{plugback}
    \begin{split}
        |f_{\text{low}}(X,Y)|&\lesssim
        \sum_{V\in\cV}\sum_{T\in\T_V} \psi_\bT* \eta^\vee (X,Y) \\
        &\lesssim (K\de)^{-2(n-1)}\sum_{V\in\cV}\sum_{T\in\T_V}\psi_{R_{0,\bT}}*\chi(X) \psi_{R_{1,\bT}}*\chi(Y)
    \end{split}
\end{equation}
Let $K R_{0,\bT}$ be a tube of dimensions $\underbrace{K\de\times\dots \times K\de}_{k-1\  \text{times}}\times \underbrace{1\times\dots \times 1}_{n-k\  \text{times}}$, which is the $K$-dilation of the short edges of $R_{0,\bT}$. Define $K R_{1,\bT}$ similarly.
Let $K\bT=KR_{0,\bT}\times KR_{1,\bT}$.
We see that $\psi_{R_{0,\bT}}*\chi(X)$ is morally a bump function at $K R_{0,\bT}$ with weight $(K\de)^{n-1}K^{-(k-1)}$. Let us just ignore the rapidly decaying tail and write
\begin{equation}
    \psi_{R_{0,\bT}}*\chi(X) \lesssim (K\de)^{n-1}K^{-(k-1)} 1_{K R_{0,\bT}}(X).
\end{equation}
Similarly, we have
\begin{equation}
    \psi_{R_{1,\bT}}*\chi(X) \lesssim (K\de)^{n-1}K^{-(k-1)} 1_{K R_{1,\bT}}(X).
\end{equation}

Plugging back to \eqref{plugback}, we obtain
\begin{equation}
    |f_{\text{low}}(X,Y)|\lesssim K^{-2(k-1)} \sum_{V\in\cV}\sum_{T\in\T_V} 1_{K\bT}(X,Y).
\end{equation}

Fix $\ell=\ell(X,Y)\in \bH$, let $W_{K\de}$ be the $K\de$-slab that is orthogonal to $V$ at $P_V(\ell_{K\de})$. 
By the $s$-dimensional condition for $\T_V$, we have
\begin{equation}
    \sum_{T\in \T_V}1_{K\bT}(X,Y)\lesssim \#\{ T\in\T_V: T\subset W_{K\de} \}\lesssim K^s.
\end{equation}
Plugging back to \eqref{plugback}, we get
\begin{equation}
    |f_{\text{low}}(X,Y)|\lesssim K^{s-2(k-1)}\#\cV.
\end{equation}
Noting that $s<2(k-1)$, we may choose $K\sim (\log \de^{-1})^{O(1)}$ for large $O(1)$ so that \eqref{lowbound} holds.

\medskip

Next, we want to regroup the slabs in $\T_V$. Note that each $T\in \T_V$ is an $(n-k+1)$-dimensional $\de$-slab, by the transversality assumption, its intersection with $\R^{n-1}\times \{0\}$ and $\R^{n-1}\times \{1\}$ are two congruent $(n-k)$-dimensional $\de$-slabs. We define $\G$ to be a maximal $\de$-separated subset of $G(n-k,n-1)$. We are going to define $\T_{V}^W$ where $W$ ranges over $\G$. For each $T\in \T_{V}$, we put $T$ into $\T_{V}^W$ if $T\cap (\R^{n-1}\times \{0\})$ is parallel to $W$ up to $\de$-error.
We obtain a partition
\begin{equation}\label{observation}
    \T_V=\bigsqcup_{W\in \G} \T_{V}^W. 
\end{equation} 
We have the following observation. 
For $T_1\in \T_{V_1}^W, T_2\in \T_{V_2}^W$, we have that if $T_1$ and $T_2$ are not comparable, then $\bT_1\cap \bT_2=\emptyset$. The reason is that if $T_1$ and $T_2$ are not comparable, then either $T_1\cap (\R^{n-1}\times \{0\})$ and $T_2\cap (\R^{n-1}\times \{0\})$ are disjoint or $T_1\cap (\R^{n-1}\times \{1\})$ and $T_2\cap (\R^{n-1}\times \{1\})$ are disjoint, which means $\bT_1\cap \bT_2=\emptyset$.

For $T\in\T_V^W$, we have
\begin{equation}
    \supp(\wh{\psi_\bT})\subset W_\de^*\times W_\de^*.
\end{equation}
Here, $W_\de^*$ is the dual rectangle of $W_\de$ in $\R^{n-1}$. Therefore, we see that $W_\de^*$ has dimensions $\underbrace{\de^{-1}\times\dots \times \de^{-1}}_{k-1\  \text{times}}\times \underbrace{1\times\dots \times 1}_{n-k\  \text{times}}$.

Now, we have
\begin{equation}\label{comb1}
    \int_{\bH_\de}(\sum_{V\in\cV}\sum_{T\in \T_V}1_\bT)^2\lesssim\int_{\bH_\de}(\sum_{V\in\cV}\sum_{T\in \T_V}\psi_\bT)^2\lesssim \int_{\bH_\de} |f_{\text{high}}|^2\lesssim \int_{\wt A(1,n)} |f_{\text{high}}|^2.
\end{equation}

By Plancherel, it is further bounded by
\begin{equation}\label{ineq1}
  \int_{\wt A(1,n)} |\sum_{V\in\cV}\sum_{W\in\G} \wh{f_{V}^W} (1-\eta)|^2= \int_{\wt A(1,n)} | \sum_{W\in\G}\sum_{V\in \cV}\wh{f_{V}^W} (1-\eta)|^2.
\end{equation}

We will estimate the overlap of $\{\supp(\sum_{V\in\cV}\wh f_{V}^W (1-\eta))\}_{W\in\G}$. We notice that \[\supp(\sum_{V\in\cV}\wh {f_{V}^W}(1-\eta))\subset \bigg(W_\de^*\times W_\de^*\bigg)\setminus B^{2(n-1)}(0,(K\de)^{-1}).  \]

We pick $(\Xi_1,\Xi_2)\in \wt A(1,n)$ in the frequency space. We assume $(\Xi_1,\Xi_2)$ lies in some $\supp(\sum_{V\in\cV}\wh {f_{V}^W}(1-\eta))$. Therefore, we can assume without loss of generality that $\Xi_1\notin B^{n-1}(0,\frac{1}{2}(K\de)^{-1})$. For any $W\in \G$, if $(\Xi_1,\Xi_2)\in \supp(\sum_{V\in\cV}\wh {f_{V}^W}(1-\eta))$, then $\Xi_1\in W^*_\de$. Therefore, the overlap of $\{\supp(\sum_{V\in\cV}\wh f_{V}^W (1-\eta))\}_{W\in\G}$ at $(\Xi_1,\Xi_2)$ is bounded by
\begin{equation}\label{1overlapbound}
    \sum_{W\in \G}1_{W_\de^*}(\Xi_1),
\end{equation}
Since $\Xi_1\notin B^{n-1}(0,\frac{1}{2}(K\de)^{-1})$, we can further bound \eqref{1overlapbound} by the overlapping number of
\begin{equation}\label{overlapnumber2}
    \bigg\{W_\de^*\setminus B^{n-1}(0,\frac{1}{2}(K\de)^{-1})\bigg\}_{W\in\G}.
\end{equation}

If we do a dilation by the factor $\de$, then $W_\de^*$ becomes $(W^\perp)_\de$. So we just need to bound the overlapping number of
\begin{equation}\label{overlapnumber}
    \bigg\{(W^\perp)_\de\setminus B^{n-1}(0,\frac{1}{2}K^{-1})\bigg\}_{W\in\G}.
\end{equation}

We observe that when $W$ ranges over $\G$, $W^\perp$ will range over $\de$-separated subset of $G(k-1,n-1)$. By \cite[Lemma 18]{bright2022exceptional}, we see that the overlapping number of \eqref{overlapnumber} is bounded by 
\begin{equation}
    K^{O(1)}\de^{-\dim(G(k-2,n-2))}
\end{equation}

The RHS of \eqref{ineq1} is bounded by
\begin{equation}\label{comb2}
\begin{split}
    &K^{O(1)}\de^{-\dim(G(k-2,n-2))}\int_{\wt A(1,n)} \sum_{W\in\G}| \sum_{V\in \cV}\wh{f_{V}^W} (1-\eta)|^2\\
    \le &K^{O(1)}\de^{-\dim(G(k-2,n-2))}\int_{\wt A(1,n)} \sum_{W\in\G}| \sum_{V\in \cV}\wh{f_{V}^W} |^2\\
    =&K^{O(1)}\de^{-\dim(G(k-2,n-2))}\sum_{W\in\G}\int_{\wt A(1,n)} | \sum_{V\in \cV}f_{V}^W|^2
\end{split}
\end{equation}
The last step is by Plancherel to return back to the physical space.

Now, for fixed $W\in\G$, we estimate
\begin{equation}\label{ineq2}
    \int_{\wt A(1,n)} | \sum_{V\in  \cV}f_{V}^W|^2=\int_{\wt A(1,n)} | \sum_{V\in \cV}\sum_{T\in\T_V^W}\psi_\bT|^2.
\end{equation}
We may ignore the rapidly decaying tail of $\psi_\bT$ and think of it as $1_\bT$. To estimate the RHS of \eqref{ineq2},
we will estimate the overlapping number of $\{\sum_{T\in\T_V^W}1_\bT\}_{V\in\cV}$. 

By an observation we addressed in the paragraph below \eqref{observation}, for any two tubes $T_1\in\T_{V_1}^W, T_2\in \T_{V_2}^W$, we have either $T_1$ and $T_2$ are comparable, or $\bT_1$ and $\bT_2$ are disjoint. Our task is as follows. For a given $T$, how many $V\in\cV$ can there be so that $T$ is comparable to some $T'\in \T_V^W$? Noting that if $T$ is comparable to some element in $\T_V^W$, then $T$ must be morally orthogonal to $V$ at some $\de$-tube. Finding such $V$ is equivalent to finding $\de$-separated lines in $G(1,T)$. Therefore, for fixed $T$, the number of such $V\in\cV$ is $\lesssim \de^{-(\dim T-1)}= \de^{-(n-k)}$. As a result, we obtain
\begin{equation}\label{comb3}
    \int_{\wt A(1,n)} | \sum_{V\in \cV}\sum_{T\in\T_V^W}1_\bT|^2\lesssim \de^{-(n-k)}\int_{\wt A(1,n)} \sum_{V\in \cV}\sum_{T\in\T_V^W}1_\bT.
\end{equation}

Combining \eqref{comb1},\eqref{ineq1},\eqref{comb2},\eqref{ineq2} and \eqref{comb3}, we get the upper bound
\begin{equation}\label{ineq3}
    \begin{split}
        \int_{\bH_\de}(\sum_{V\in\cV}\sum_{T\in\T_V}1_T)^2\lesssim K^{O(1)}\de^{-(k-2)(n-k)}\de^{-(n-k)}\int_{\wt A(l,n)} \sum_{V\in\cV}\sum_{T\in\T_V}1_\bT.
    \end{split}
\end{equation}
We notice that for any $(n-k+1)$-dimensional slab $S$, the dimension of
\[ \{\ell\in \wt A(1,n): \ell\cap B^n(0,1)\subset S  \} \]
is $2(n-k)$. If $S$ is the core of the $\de$-slab $T$, or in other words $T=S_\de$, then
\[ \bT=\{\ell\in \wt A(1,n): \ell\cap B^n(0,1)\subset T  \} \]
is roughly the $\de$-neighborhood of $\{\ell\in \wt A(1,n): \ell\cap B^n(0,1)\subset S  \}$ in $\wt A(1,n)$, which
has measure $\sim \de^{2(n-1)-2(n-k)}$. We also note that $K\sim (\log\de^{-1})^{O(1)}$. We can bound the RHS of \eqref{ineq3} by
\begin{equation}
    \lesssim_\e  \de^{-\e}\de^{-(k-2)(n-k)}\de^{-(n-k)} \de^{-s-t}\de^{2(n-1)-2(n-k)}
\end{equation}

In summary, we obtain
\begin{equation}
     \int_{\bH_\de}(\sum_{V\in\cV}\sum_{T\in\T_V}1_\bT)^2\lesssim_\e \de^{-\e}\de^{-(k-2)(n-k)}\de^{-(n-k)} \de^{-s-t}\de^{2(n-1)-2(n-k)}.
\end{equation}

Comparing with the lower bound
\begin{equation}
    \int_{\bH_\de}(\sum_{V\in\cV}\sum_{T\in \T_V}1_\bT)^2\gtrsim \de^{2(n-1)}\#\bH\de^{-2t}\sim \de^{2(n-1)}\de^{-a}\de^{-2t},
\end{equation}
we obtain
\begin{equation}
    t\le k(n-k)+s-a+(n-k).
\end{equation}
\end{proof}

\section{The projection problem for \texorpdfstring{$A(l,n)$}{}}

It is quite natural to further think about the same projection problem for $A(l,n)$. We formally state the problem.

Fix integers $0<l<k<n$.
Let $V\in G(k,n)$.
Note that for $L\in A(l,n)$, we have $P_V(L)\in A(j,V)$ for some $0\le j\le k$. We can define
\[ \pi_{l,V}: A(l,n)\rightarrow \bigsqcup_{j=0}^l A(j,V) \]
\[ L\mapsto P_V(L). \]

We can ask the same Marstrand-type projection problem. For $0<a<(l+1)(n-l)=\dim(A(l,n))$, what is the optimal number $s(n,k,l,a)$ such that the following is true? Let $\bA\subset A(l,n)$ with $\dim(\bA)=a$. Then we have
\begin{equation}
    \dim(\pi_{l,V}(\bA))=s(n,k,l,a),\ \textup{for~a.e.~}V\in G(k,n).
\end{equation}

For simplicity, we assume $n,k,l$ are fixed. So we write $\pi_V$ for $\pi_{l,V}$.

\begin{definition}
    Fix $0\le l< k<n$. For any $0<a<\dim(A(l,n))$, define
    \begin{equation}
        S(a):=\inf_{\bA\subset A(l,n), \dim(\bA)=a}\esssup_{V\in G(k,n)}\dim(\pi_V(\bA)).
    \end{equation}
Here, we require $\bA$ to be a Borel set to avoid some measurability issue. We also remark that $S(a)=S_{l,k,n}(a)$ should also depend on $l,k,n$, but we just omit them from the notation for simplicity.
\end{definition}

A reasonable conjecture would be:

\begin{conjecture}
    For $j=0,1,\dots,l$, we have the following lower bounds of $S(a)$. 
\begin{equation}
    \begin{split}
        S(a)= a-(l-j)(n-k)\ \ \ a\in [(l-j)(n-l),(l-j)(n-l)+k-l].\\
        S(a)= (l-j+1)(k-l)\ \ \ a\in [(l-j)(n-l)+k-l,(l-j+1)(n-l)].
    \end{split}
\end{equation}
\end{conjecture}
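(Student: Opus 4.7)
The plan is to mirror the blueprint of the $l=1$ case: for each $j\in\{0,1,\dots,l\}$ I would establish a Kaufman-type estimate that yields the ``linear'' bound $S(a)\ge a-(l-j)(n-k)$ on the interval $[(l-j)(n-l),\,(l-j)(n-l)+k-l]$, and a Falconer-type estimate that yields the ``saturated'' bound $S(a)\ge (l-j+1)(k-l)$ on $[(l-j)(n-l)+k-l,\,(l-j+1)(n-l)]$. As in Theorem \ref{exthm}, I expect the two families of estimates to be genuinely complementary, each decisive on its own interval.

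The first step is to develop a Fourier transform on the transverse subset $\wt A(l,n)\subset A(l,n)$ consisting of $l$-planes transverse to $\R^{n-l}$. Any such $L$ is the graph of a unique affine map $\R^{l}\to \R^{n-l}$ and is therefore determined by its $l+1$ intersection points with $l+1$ affinely independent parallel $(n-l)$-planes. This identifies $\wt A(l,n)\cong (\R^{n-l})^{l+1}$ and lets one pull back the Euclidean Fourier transform. The correct analog of Definition \ref{defrect} is the diagonal product $\bR=R_0\times R_1\times\cdots\times R_l$ of $l+1$ translates of a single rectangle in $\R^{n-l}$, with dual $\bR^*$ the diagonal product of the duals. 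With this Fourier setup in place, the high/low decomposition of Section \ref{sec5} adapts directly.

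Next I would prove discretized Kaufman-type and Falconer-type estimates analogous to Theorems \ref{diskaufman} and \ref{disfalconer}. Here the slabs $T\in\T_V$ are $(n-k+l)$-dimensional $\de$-slabs orthogonal to $V$ at an $l$-dimensional $\de$-tube (the natural generalization of Definition \ref{deflineorth}). For the Kaufman-type estimate, the incidence count again proceeds via the two-point plank $\cP$ associated with a pair of nearby $L,L_1\in\bH$, combined with a Lemma \ref{dimlem}-style computation of $\dim\{V\in G(k,n): \dim P_V(\cP)\le 1\}$, which is unchanged from the $l=1$ case. For the Falconer-type estimate, the genuinely new input is an overlap bound for the dual slabs $\bR^*\subset (\R^{n-l})^{l+1}$ away from a small ball. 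The ``direction'' of such a dual slab lies in $G(k-l,n-l)$, so the overlap count reduces, via a higher-dimensional analog of \cite[Lemma 18]{bright2022exceptional}, to counting $\de$-separated configurations in a suitable Grassmannian. The passage from the discretized statements to the Hausdorff-dimension lower bounds on $S(a)$ then follows the density-point localization argument of Section \ref{sec3}, applied at a scale adapted to $j$.

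The main obstacle I expect is the multi-scale structure indexed by $j$. In the $l=1$ case the stacking depth is binary — either $\bA$ consists of lines through a fixed point, or it does not — whereas for $l\ge 2$ one must track a full flag of intermediate $j$-plane containments. I anticipate that the cleanest approach will be an induction on $l$, invoking the Marstrand-type result at level $j<l$ to extract density points of $\bA$ whose infinitesimal structure is that of ``$l$-planes containing a $j$-plane from a Frostman family,'' and then applying the Kaufman/Falconer estimates at the appropriate rescaled $\de$. Identifying the correct $s$-dimensional Frostman condition on $\T_V$ at each level $j$, and bookkeeping the transversality and dimension parameters consistently across all scales, is the central technical challenge.
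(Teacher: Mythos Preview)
The statement you are proposing to prove is a \emph{conjecture} in the paper; the paper does not prove it. What the paper does establish is (i) the upper bound $S(a)\le\cdots$ for every $j\in\{0,\dots,l\}$, via explicit bush constructions, and (ii) the matching lower bound only at the two extremes $j=l$ (from a Kaufman-type exceptional set estimate) and $j=0$ (from a Falconer-type estimate). The author states explicitly that the intermediate values $1\le j\le l-1$ are out of reach of the present methods.

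Your outline for the Fourier transform on $\wt A(l,n)\cong(\R^{n-l})^{l+1}$ and the slab/dual-slab formalism is the natural generalization, and it does recover the two extreme cases the paper obtains. The genuine gap is in the last two paragraphs. A single Kaufman-type estimate, built from the two-point plank $\cP$ and the unchanged Lemma~\ref{dimlem} computation, produces \emph{one} threshold (the $j=l$ piece of the staircase), and a single Falconer-type estimate produces \emph{one} more (the $j=0$ piece). Neither mechanism, as stated, yields a family of $l+1$ distinct thresholds indexed by $j$. Your proposed remedy --- an induction on $l$ that extracts density points of $\bA$ with local ``bush over a $j$-plane'' structure --- is precisely the missing idea, but it is not a routine adaptation: a generic $a$-dimensional Borel set $\bA\subset A(l,n)$ has no reason to exhibit any intermediate-flag structure at density points, and there is no Marstrand-type input at level $j$ to invoke until the conjecture itself is known at that level. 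Identifying the correct exceptional-set inequality that would force the $j$-th threshold, and proving it, is the open problem; your proposal names it but does not resolve it.
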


We are able to show the upper bound of $S(a)$ by constructing examples as follows.

\begin{proposition}
    For $j=0,1,\dots,l$, we have the following lower bounds of $S(a)$. 
\begin{equation}\label{propineq}
    \begin{split}
        S(a)\le a-(l-j)(n-k)\ \ \ a\in [(l-j)(n-l),(l-j)(n-l)+k-l].\\
        S(a)\le (l-j+1)(k-l)\ \ \ a\in [(l-j)(n-l)+k-l,(l-j+1)(n-l)].
    \end{split}
\end{equation}
\end{proposition}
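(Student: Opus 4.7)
The plan is to mimic, case-by-case in $j$, the two-regime construction from Proposition \ref{upperprop} in the $l=1$ setting. For each fixed $j \in \{0,1,\dots,l\}$, writing $r = l-j$, I would exhibit an explicit Borel set $\bA \subset A(l,n)$ of Hausdorff dimension $a$, for $a$ in the associated subinterval, and check that for a.e.\ $V \in G(k,n)$ the image $\pi_V(\bA)$ satisfies \eqref{propineq}.

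The construction splits along $j=0$ versus $j \ge 1$. Fix a decomposition $\R^n = \R^{n-r} \oplus \R^r$. When $j=0$, take $\bF \subset \R^{n-r}$ a Borel set with $\dim \bF = \alpha := a - r(n-l) \in [0, n-l]$, and set
\[ \bA = \{L \in A(l,n) : L \cap \R^{n-r} \in \bF,\ L \text{ transverse to } \R^{n-r}\}. \]
When $j\ge 1$, additionally fix a $(j-1)$-plane $\pi_0 \subset \R^{n-r}$ through the origin; the $j$-planes in $\R^{n-r}$ containing $\pi_0$ form a Grassmannian of dimension $n-l$, and take $\bF$ a Borel subset of it with $\dim \bF = \alpha$, setting
\[ \bA = \{L \in G(l,n) : \pi_0 \subset L,\ L \cap \R^{n-r} \in \bF,\ L \text{ transverse to } \R^{n-r}\}. \]
These reduce to the two constructions in the proof of Proposition \ref{upperprop} when $l=1$.

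Next I would carry out two dimension counts. First, $\dim \bA = a$: the map $L \mapsto L \cap \R^{n-r}$ sends $\bA$ onto $\bF$, and its fiber over any fixed $F$ (the transverse $l$-plane extensions of $F$, through the origin and containing $\pi_0$ when $j \ge 1$) is an open subset of a Grassmannian of dimension $r(n-l)$; a product--fibration argument yields $\dim \bA = \alpha + r(n-l) = a$. Second, bound $\dim \pi_V(\bA)$: for generic $V$, $\pi_V(L) = P_V(L)$ is an $l$-plane in $V$ containing the $j$-plane $P_V(F)$, and when $j\ge 1$ also $P_V(\pi_0)$. So $\pi_V(\bA)$ lies in the sub-Grassmannian $\{L' \in G(l,V) : P_V(\pi_0) \subset L'\}$ (respectively in $A(l,V)$ when $j=0$), which has dimension $(r+1)(k-l)$; this is the constant cap. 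Within it, the $l$-planes containing a fixed $P_V(F)$ form a fiber of dimension $r(k-l)$, while $\{P_V(F) : F \in \bF\}$ has dimension at most $\alpha$ since the projection map is Lipschitz. Combining gives the linear bound $\alpha + r(k-l) = a - r(n-k)$. Together these two regimes produce \eqref{propineq}.

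The main technical point I expect to require attention is verifying non-degeneracy for a.e.\ $V$ rather than a single $V$: the transversality of $L$ and $V^\perp$, injectivity of $P_V$ on the directions of $\pi_0$ and $F$, and the assertion that $P_V(F)$ has the expected dimension $j$ each fail only on a proper subvariety of $G(k,n)$, so a standard Fubini--null-set argument suffices. All other steps reduce to straightforward dimension identities matching the stated intervals.
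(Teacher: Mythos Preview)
Your construction follows the same bush-over-base strategy as the paper: choose an $\alpha$-dimensional family $\bF$ of ``stems'' ($j$-planes), let $\bA$ consist of the $l$-planes containing some stem, and bound $\pi_V(\bA)$ by the union of the projected bushes.  The paper parametrises the stems as parallel affine translates $V_j+v$ with $v$ ranging in a set $B\subset V_l^\perp$; you instead take (for $j\ge 1$) linear $j$-planes in $\R^{n-r}$ through a fixed $(j-1)$-plane $\pi_0$.  Both parametrisations sit inside an $(n-l)$-dimensional family, and both give fibres of dimension $(l-j)(n-l)$, so the numerology matches.  Your fibration map $L\mapsto L\cap\R^{n-r}$ is a tidy way to obtain $\dim\bA=a$, replacing the paper's explicit disjointness verification for the restricted bushes $\Bush'$.

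There is, however, one genuine gap.  Your ``Fubini--null-set argument'' is aimed at the wrong target.  It shows that for each fixed $L$ the set of bad $V$'s (those for which $P_V(L)$ drops dimension) is null, hence that for a.e.\ $V$ the set of bad $L$'s is null in $\bA$.  But a null subset of $\bA$ can still have a large image under $\pi_V$: for \emph{every} $V$ there remain $L\in\bA$ with $\dim P_V(L)<l$, and those images live in the strata $A(i,V)$ with $i<l$, which are not contained in the envelope $\{L'\in G(l,V):P_V(\pi_0)\subset L'\}$ you write down.  The paper deals with this head-on in \eqref{exproject}, decomposing $\pi_V(\Bush(l,V_j+v))$ as a union over $j\le i\le l$ of bushes $\Bush(i,\cdot,V)$ and then arguing that the $i=l$ piece carries the largest dimension.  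Your sketch needs an analogous stratum-by-stratum bound on $\pi_V(\bA)\cap A(i,V)$ for each $i\le l$; the transversality-in-$V$ argument you propose does not supply it.
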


\begin{proof}
Fix $j\in \{0,\dots,l\}$. Let $V_j=\R^j\times \{0\}^{n-j}$, $V_l=\R^l\times \{0\}^{n-l}$, $V_k=\R^k\times \{0\}^{n-k}$. We have $V_j\subset V_l\subset V_k\subset \R^n$. We introduce a new notation. For $j<l$ and $V\in G(j,n)$, define
\begin{equation}
\Bush(l,V):=\{W\in G(l,n): W\supset V\},    
\end{equation}
which is the set of $l$-subspaces that contain $V$. We call $V$ the \textit{stem} of the bush. It is not hard to see for $V\in G(j,n)$,
\begin{equation}
    \dim(\Bush(l,V))=(l-j)(n-l).
\end{equation}

We first consider the case $a=(l-j)(n-l)+b$ where $0\le b\le k-l$. We choose $B\subset V_l^\perp$, so that $\dim(B)=b$. Note that this is allowable since $\dim(V_l^\perp)=n-l\ge b$. Next, we choose
\begin{equation}\label{exA}
    \bA=\bigcup_{v\in B} \Bush(l, V_j+v).
\end{equation}
One way to think about $\bA$ is as follows. We first locate the set of stems $\{V_j+v\}_{v\in B}$, and then enrich each stem to become a bush $\Bush(l,V_j+v)$. $\bA$ is the union of these bushes.

We compute the dimension of $\bA$. The idea is that we have $b$-dimensional set of bushes and each bush has dimension $(l-j)(n-l)$, so we have 
\[\dim \bA\le \dim B+\dim(\Bush(l,V_j))= (l-j)(n-l)+b.\] 
The equality holds if there is not too much overlap between different bushes. We handle it in the following way. Consider
\begin{equation}
    \Bush'(l,V_j+v):=\{W\in\Bush(l,V_j+v): W\cap V_l^\perp=\{0\}  \},
\end{equation}
which are the $l$-planes in the bush that are transverse to $V_l^\perp$. 
It is not hard to see 
\[\dim(\Bush'(l,V_j+v))=\dim(\Bush(l,V_j+v))=(l-j)(n-l).\]
We also have that for $v_1\neq v_2\in B$,
\begin{equation}
    \Bush'(l,V_j+v_1)\cap \Bush'(l,V_j+v_2)=\emptyset.
\end{equation}
We prove it. Suppose there exists $W\in \Bush'(l,V_j+v_1)\cap \Bush'(l,V_j+v_2)$. Then $W\supset V_j+v_1, V_j+v_2$ which implies $W\supset v_1-v_2$. However $v_1-v_2\in V_l^\perp$ which contradicts $W\cap V_l^\perp=\{0\}$.

Now we have 
\begin{equation}
    \bA\supset \bigsqcup_{v\in B}\Bush'(l, V_j+v), 
\end{equation}
so $\dim(\bA)\ge \dim B+\dim(\Bush'(l,V_j+v))=  (l-j)(n-l)+b$.
Therefore, we have
\begin{equation}
    \dim(\bA)=(l-j)(n-l)+b=a.
\end{equation}

Next, we show that for generic $V\in G(k,n)$,
\begin{equation}\label{exineq1}
    \dim(\pi_V(\bA))\le (l-j)(k-l)+b=a-(l-j)(n-k).
\end{equation}
We need the following notation. For subspaces $W_1\subset W_2$ with $\dim W_1\le i\le \dim W_2$, define
\begin{equation}
    \Bush(l,W_1,W_2):=\{W\in G(l,W_2): W\supset W_1 \}.
\end{equation}
One can check that $\Bush(l,W_1,\R^n)=\Bush(l,W_1)$ and $\Bush(l,\{0\},W_2)=A(l,W_2)$.

We prove \eqref{exineq1}. If $V\in G(k,n)$ satisfies $V_j\cap V^\perp=\{0\}$ (or in other words, $\pi_V(V_j)$ is $j$-dimensional), then
for each bush $\Bush(l,V_j+v)$ in $\bA$, we have 
\begin{equation}\label{exproject}    \pi_V(\Bush(l,V_j+v))=\Bush(l,\pi_V(V_j+v),V)\sqcup\bigsqcup_{j\le i\le l-1}\Bush(i,\pi_V(V_j+v),V).
\end{equation}
In other words, $\pi_V$ project the bush $\Bush(l,V_j+v)$ to the set of planes of dimension less than $l$ in $V$ that contain $\pi_{V}(V_j+v)$.
Since the first term on the right hand side of \eqref{exproject} dominates the Hausdorff dimension, we have
\begin{equation}
\dim(\pi_V(\Bush(l,V_j+v)))=\dim(\Bush(l,\R^j,\R^k))=(l-j)(k-l).
\end{equation}
Therefore,
\begin{equation}
    \dim(\pi_V(\bA))\le \dim B+(l-j)(k-l)=b+(l-j)(k-l),
\end{equation}
which finishes the proof of \eqref{exineq1}, and hence the first part of \eqref{propineq}.

\medskip

We prove the second part of \eqref{propineq}. We just need to show that for $a=(l-j+1)(n-l)$. The idea is similar to \eqref{exA}, but here we choose $B=V_l^\perp$. We construct
\begin{equation}
    \bA=\bigcup_{v\in V_l^\perp} \Bush(l, V_j+v).
\end{equation}
By the same reasoning, we have
\[ \dim(\bA)=\dim (V_l^\perp)+ (l-j)(n-l)=(l-j+1)(n-l). \]

On the other hand, for generic $V\in G(k,n)$, $\pi_V(\bA)$ consists of $(k-l)$-dimensional set of bushes, since the stems of these projected bushes are parallel $l$-planes in $V$. Also, the dimension of each bush is $(l-j)(k-l)$. Therefore, we have
\begin{equation}
    \dim(\pi_V(\bA))\le (k-l)+(l-j)(k-l)=(l-j+1)(k-l).
\end{equation}
This finishes the second part of \eqref{propineq}.

\end{proof}

\bigskip

However, with the current method in the paper, we are not able to show the lower bound of $S(a)$ for all range of $a$. We can also prove Falconer-type and Kaufman-type exceptional set estimates, and obtain some partial results. We just state it without proof.

\begin{theorem}
    For $j=0$ or $l$, we have the following lower bound of $S(a)$.
    \begin{align}
    S(a)\ge a-(l-j)(n-k)\ \ \ a\in [(l-j)(n-l),(l-j)(n-l)+k-l].\\
    S(a)\ge (l-j+1)(k-l)\ \ \ a\in [(l-j)(n-l)+k-l,(l-j+1)(n-l)].
\end{align}
\end{theorem}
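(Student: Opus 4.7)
The plan is to follow the two-pronged strategy of the $l=1$ case: establish discretized Kaufman-type and Falconer-type exceptional set estimates for $\pi_V$, and then deduce the lower bounds on $S(a)$ via the density-point localization argument of Section~\ref{sec3}. Specifically, the case $j=l$ of the theorem will follow from
\[ \dim(E_s(\bA))\le k(n-k)+s-(k-l), \]
and the case $j=0$ will follow from
\[ \dim(E_s(\bA))\le k(n-k)+l(n-k)+s-a \]
(valid up to the Frostman constraint $s<(l+1)(k-l)$). The localized setting is the natural generalization: $\bA\subset A_\mu\subset \Aloc(l,n)$ and $V\in G_\mu\subset G(k,n)$ with $\angle(\dir(\ell),V^\perp)>\mu$ for every $\ell\in A_\mu$ and $V\in G_\mu$.

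For the Kaufman estimate I would mirror Section~\ref{sec4}. The $\delta$-thickened fiber of $\pi_V$ is an $(n-k+l)$-dimensional $\delta$-slab $T$ orthogonal to $V$ at some $l$-dimensional $\delta$-tube, and the Frostman condition reads $\#\{T\in \T_V: T\subset W_r\}\lesssim (r/\delta)^s$ for all such $r$-slabs $W_r$. The incidence argument proceeds verbatim once one computes the dimension of the exceptional variety
\[ \cM=\{V\in G(k,n):\dim P_V(\cP)\le l\} \]
for an $(l+1)$-plane $\cP$: $V^\perp$ must contain at least one line from $\cP$, which is an $l$-parameter choice of direction in $\cP$ followed by a $k(n-k-1)$-parameter completion inside the orthogonal complement of that line, so $\dim\cM=l+k(n-k-1)$. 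Summing the pairwise intersection bound
\[ \#(\cI_\ell\cap \cI_{\ell_1})\lesssim (d/\delta)^{l+k(n-k-1)}\,d^{-t} \]
over dyadic scales $d$ and using the $(\delta,u)$-set property of $\bH'$ yields the required inequality $u+l+k(n-k-1)-t\le 0$, which rearranges to $t\le k(n-k)+s-(k-l)$.

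For the Falconer estimate I would first develop Fourier analysis on $\wt A(l,n)$, the set of $l$-planes transverse to $\R^{n-l}$. Via the graph parametrization $\ell=\{(x,Ax+b):x\in \R^l\}$ with $A\in \R^{(n-l)\times l}$ and $b\in\R^{n-l}$, we identify $\wt A(l,n)\cong \R^{(l+1)(n-l)}$ (coordinates: the $l$ columns of $A$ together with $b$) and pull back the standard Fourier transform. The rectangles $\bR$ adapted to the problem are $(l+1)$-fold products of congruent $(n-l)$-dimensional rectangles, one per column of $(A\mid b)$, with $\bR^*$ the corresponding product of duals. The $\delta$-slab around a fixed $\ell$ is such a product, each column of dimensions $\delta^{k-l}\times 1^{n-k}$, and its dual has columns of dimensions $\delta^{-(k-l)}\times 1^{n-k}$. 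With this machinery the high-low argument of Theorem~\ref{disfalconer} generalizes: set $f=\sum_{V\in\cV}\sum_{T\in\T_V}\psi_\bT$, cut at frequency $(K\delta)^{-1}$, control the low part by $K^{s-(l+1)(k-l)}\#\cV$ via the Frostman condition, apply Plancherel to the high part, group the slabs into classes $\T_V^W$ indexed by $W\in G(n-k,n-l)$, bound the Fourier-side overlap by $\delta^{-(k-l-1)(n-k)}$ using the $G(k-l-1,n-l-1)$-version of \cite[Lemma~18]{bright2022exceptional}, and contribute the physical-side factor $\delta^{-l(n-k)}$ from the $l(n-k)$-dimensional family of $V$'s with $V^\perp\subset T$ for a fixed $(n-k+l)$-slab $T$. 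Combined with the volume $\delta^{(l+1)(k-l)}$ of $\bT$ and the lower bound $\delta^{(l+1)(n-l)-a-2t}$ on the integral $\int_{\bH_\delta}(\sum 1_\bT)^2$, this yields $t\le (k+l)(n-k)+s-a$.

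The main obstacle is the bookkeeping for the Falconer step: one must verify that $\psi_\bT$ factors cleanly across the $l+1$ columns in the graph parametrization, that the Fourier-side overlap count via the $G(k-l-1,n-l-1)$-argument lands on the correct power of $\delta$, and that grouping into the classes $\T_V^W$ preserves essential disjointness of physical supports (generalizing the two-point observation below \eqref{observation} to the $l$-plane setting). The Kaufman step is conceptually routine modulo the $\cM$-dimension count, but one should also verify that the plank argument of Lemma~\ref{dimlem} extends: for nearby $\ell,\ell_1\in \bH'$, the $(l+1)$-plane $\cP$ spanned by $\ell$ and the translation vector is genuinely $(l+1)$-dimensional, which follows from the $(\delta|\log\delta|^{O(1)},u)$-set separation on $\bH'$ combined with $\mu$-localization. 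Once both estimates are in place, the deduction of the $j=0$ and $j=l$ lower bounds on $S(a)$ is identical to Section~\ref{sec3}: pass to a density point, localize to transverse $A_\mu,G_\mu$, and apply the appropriate estimate with $s$ just below the target dimension.
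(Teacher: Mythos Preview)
The paper states this theorem without proof, only remarking that ``$j=0$ corresponds to the Falconer-type estimate, while $j=l$ corresponds to the Kaufman-type estimate.'' Your proposal is exactly the intended generalization of Sections~\ref{sec4}--\ref{sec5} from $l=1$ to general $l$, and the numerology you record is correct throughout: the dimension count $\dim\cM=l+k(n-k-1)$ for the Kaufman step, and on the Falconer side the low-part gain $K^{s-(l+1)(k-l)}$, the Fourier-side overlap $\de^{-\dim G(k-l-1,n-l-1)}=\de^{-(k-l-1)(n-k)}$, the physical-side overlap $\de^{-l(n-k)}$ coming from $\dim G(n-k,n-k+l)=l(n-k)$, and the slab volume $|\bT|\sim\de^{(l+1)(k-l)}$, which combine to give $t\le (k+l)(n-k)+s-a$. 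Your slice-point/column parametrization of $\wt A(l,n)\cong\R^{(l+1)(n-l)}$ is the natural extension of the $(X,Y)$ coordinates, and the essential-disjointness observation below \eqref{observation} generalizes because the $l+1$ slices $R_{i,\bT}$ of a slab $T\in\T_V^W$ are mutually parallel $(n-k)$-dimensional $\de$-slabs in $\R^{n-l}$, so two non-comparable slabs must have at least one disjoint pair of slices.
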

$j=0$ corresponds to the Falconer-type estimate, while $j=l$ corresponds to the Kaufman-type estimate.

\bibliographystyle{abbrv}
\bibliography{refs}

\end{document}